\newtheorem{thm}{Theorem}
\newtheorem{lma}{Lemma}
\newtheorem{prop}{Proposition}
\newtheorem{cor}{Corollary}
\theoremstyle{definition}
\theoremstyle{remark}
\newtheorem{remark}{Remark}
\newcommand{\tr}{\mbox{tr}}
\renewcommand{\div}{\mbox{div}}
\newcommand{\Ric}{\mbox{Ric}}
\newcommand{\R}{\mathbb R}
\newcommand{\be}{\begin{equation}}
\newcommand{\ee}{\end{equation}}
\newcommand{\bee}{\begin{equation*}}
\newcommand{\eee}{\end{equation*}}
\def\b{\beta\mathbb{}}
\def\na{\nabla}
\def\la{\langle}
\def\ra{\rangle}
\def\Pi{\displaystyle{\mathbb{II}}}
\def\tf{\tilde{f}}
\def\a{\alpha}
\def\tr{\tilde{r}}
\def\C{\mathbb{C}}
\begin{document}

\title[]{gradient steady K\"{a}hler Ricci solitons with non-negative Ricci curvature and integrable scalar curvature}

\author{Pak-Yeung Chan}
\address{School of Mathematics,
University of Minnesota, Minneapolis, MN 55455, USA} \email{chanx305@umn.edu}

\maketitle
\markboth{Pak-Yeung Chan} {Gradient steady K\"{a}hler Ricci solitons with $\Ric\geq 0$ and $S$ $\in$ $L^1$}

\begin{abstract} We study the non Ricci flat gradient steady K\"{a}hler Ricci soliton with non-negative Ricci curvature and weak integrability condition of the scalar curvature $S$, namely $\underline{\lim}_{r\to \infty} r^{-1}\int_{B_r} S=0$, and show that it is a quotient of $\Sigma\times \C^{n-1-k}\times N^k$, where $\Sigma$ and $N$ denote the Hamilton's cigar soliton and some compact K\"{a}hler Ricci flat manifold respectively. As an application, we prove that any non Ricci flat gradient steady K\"{a}hler Ricci soliton with $\Ric\geq 0$, together with subquadratic volume growth or $\limsup_{r\to \infty} rS<1$ must have universal covering space isometric to $\Sigma\times \C^{n-1-k}\times N^k$.

\end{abstract}

\section{Introduction}
Let $(M^m,g)$ be a real $m$ dimensional Riemannian manifold and $X$ be a smooth vector field on $M$, the triple $(M,g,X)$ is said to be a Ricci soliton if there is a constant $\lambda$ such that the following equation is satisfied
\be\label{eq-RS-1}
\Ric+\dfrac{1}{2}L_{X}g=\lambda g,
\ee
where $Ric$ and $L_{X}$ denote the Ricci curvature and Lie derivative with respect to $X$ respectively. A Ricci soliton is called shrinking (steady, expanding) if $\lambda>0$ $(=0, <0)$. It is said to be gradient if $X$ can be chosen such that $X=\nabla f$ for some smooth function $f$ on $M$.  The soliton is called complete if $(M,g)$ is complete as a Riemannian manifold.

 Ricci soliton is a self similar solution of the Ricci flow and often arises as a blow up limit of the Ricci flow near its singularities. It is closely related to the singularities models of the Ricci flow introduced by Hamilton \cite{Hamilton-1995}. The classification of Ricci soliton would give us a better understanding on the singularities formation of the Ricci flow.

 Ricci solitons can also be viewed as an extension of the Einstein metric $Ric=\lambda g$. Bakry and Emery first introduced the Bakry Emery Ricci curvature $Ric_f:=Ric+\nabla^2 f$ in \cite{BakryEmery-1985}. The Bakry Emery Ricci curvature is one of the most important geometric quantities in the theory of smooth metric measure spaces and appears in other branches of mathematics like probability (see \cite{Lott-2003}). Together with the fact that $L_{\nabla f}g=2\nabla^2 f$, the gradient Ricci solitons equation \eqref{eq-RS-1} can be rewritten as
\be\label{eq-RS-2}
\Ric_{f}=Ric+\nabla^2 f=\lambda g,
\ee
which is a natural generalization of the Einstein metric.

In \cite{Deruelle-2012}, Deruelle proved that any complete non-flat gradient steady Ricci soliton with non-negative sectional curvature and scalar curvature $S$ $\in L^1(M,g)$ is isometric to a quotient of $\Sigma\times\R^{m-2}$, where $\Sigma$ denotes the Hamilton's cigar soliton. Later Catino-Mastrolia-Monticelli \cite{CatinoMastroliaMonticelli-2016} weakened the integrability condition of $S$ to
\be\label{weakly integrable S}
\liminf_{r\to \infty}\frac{1}{r}\int_{B_r(p_0)}S=0,
\ee
for some $p_0$ $\in M$ (see also \cite{MunteanuSungWang-2017} by Munteanu-Sung-Wang for a different proof).  Since $S\geq 0$ (see Section 2), It is clear that the above condition is true independent on the base point $p_0$, i.e. (\ref{weakly integrable S}) holds for some $p_0$ in $M$ if and only if it holds for all $p_0$ in $M$.

\begin{thm}\cite{Deruelle-2012}, \cite{CatinoMastroliaMonticelli-2016}, \cite{MunteanuSungWang-2017}\label{nonnegative Sec and integrable scalar curvature} Let $(M,g,f)$ be a real $m$ dimensional non-flat complete gradient steady Ricci soliton with non-negative sectional curvature. Suppose in addition that the scalar curvature $S$ satisfies (\ref{weakly integrable S}), then the universal cover of $(M,g)$ is isometric to $\Sigma\times\R^{m-2}$, where $\Sigma$ denotes the Hamilton's cigar soliton.
\end{thm}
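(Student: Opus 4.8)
\noindent\emph{Proof proposal.} My plan is to show that the hypothesis forces the Ricci tensor to have rank exactly $2$ with equal non-zero eigenvalues, and then to extract the splitting from the de Rham decomposition. First I would record the standard gradient-steady-soliton identities for $\Ric+\Hess f=0$: tracing gives $S+\Delta f=0$; the contracted second Bianchi identity gives $\na S=2\Ric(\na f,\cdot)$ and Hamilton's identity $S+|\na f|^2=c_0$; and one more differentiation gives $\D S=-2|\Ric|^2$, i.e.
\be\label{prop-divS}
\div(e^{-f}\na S)=-2e^{-f}|\Ric|^2\le 0 .
\ee
If $c_0=0$ then $S\equiv0$, hence $\Ric\equiv0$, hence (by $\sec\ge0$) the metric is flat; so $c_0>0$ and I rescale to $c_0=1$, giving $0\le|\na f|<1$ and at-most-linear growth of $|f|$. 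The evolution $\partial_t S=\Delta S+2|\Ric|^2$ along the Ricci flow generated by the soliton, together with the strong maximum principle, gives $S>0$ everywhere; and since $\sec\ge0$, at any point where $\operatorname{rank}\Ric\le1$ the full curvature tensor would vanish, contradicting $S>0$, so $\operatorname{rank}\Ric\ge2$ everywhere. Finally, a direct computation from \eqref{prop-divS} (using $\Delta e^{-f}=e^{-f}$) gives
\be\label{prop-lapES}
\Delta(e^{-f}S)=e^{-f}\big(S-2\Ric(\na f,\na f)-2|\Ric|^2\big),
\ee
whose right-hand side vanishes identically on $\Sigma\times\R^{m-2}$. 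Thus the target reduces to: \emph{prove $u:=e^{-f}S$ is constant}. Indeed $\na u\equiv0$ means $\Ric(\na f,\cdot)=\tfrac12 S\na f$, and substituting this into \eqref{prop-lapES} yields $|\Ric|^2\equiv\tfrac12 S^2$; combined with $\operatorname{rank}\Ric\ge2$ and $S>0$, this pins down the eigenvalues of $\Ric$ to be $\tfrac S2,\tfrac S2,0,\dots,0$ at every point.

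This rigidity is the analytic core and the step I expect to be the main obstacle. The function $u$ is positive, and the first sub-step is to prove it is \emph{bounded}; this is where \eqref{weakly integrable S} enters essentially, since on the Bryant soliton (which fails \eqref{weakly integrable S}) one has $u\to\infty$. The second sub-step is to upgrade boundedness to \emph{harmonicity} of $u$: on a complete manifold with $\Ric\ge0$ Yau's theorem forces a positive harmonic function to be constant, so it would suffice to kill the bracket in \eqref{prop-lapES}. The plan for this is to integrate \eqref{prop-divS} against cutoffs, exploiting the pointwise bound $|\na S|^2\le 4(1-S)|\Ric|^2$ and the precise form $\na S=2\Ric(\na f,\cdot)$, so as to control a weighted curvature integral such as $\int_{B_r}e^{-f}|\Ric|^2$ in terms of the \emph{unweighted} quantity $\int_{B_{cr}}S$, which \eqref{weakly integrable S} then kills along a subsequence; the equality cases of the Cauchy--Schwarz inequalities used along the way should produce $\na u\equiv0$. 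I have not worked out the details of this step, and I expect the honest difficulty to be precisely that the natural weighted integrals carry the exponentially large factor $e^{-f}$ while \eqref{weakly integrable S} controls only an unweighted integral of $S$; reconciling the two should require a more refined argument than a direct estimate on metric balls.

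Granting the rigidity, the conclusion is short. Every vector in $\ker\Ric$ annihilates the bilinear form $\Ric$, hence, because $\sec\ge0$, lies in the kernel of the curvature operator; since $\Ric$ has constant rank $2$ with equal eigenvalues $\tfrac S2$, the curvature operator equals $\tfrac S2$ times the orthogonal projection onto $\Lambda^2 E$, where $E:=\operatorname{Im}\Ric$ is a smooth rank-$2$ distribution, and in particular the curvature operator is $\ge0$. Hamilton's strong maximum principle then makes the line bundle $\Lambda^2 E\subset\Lambda^2 TM$ parallel, so a local unit section of it is a parallel $2$-form of rank $2$, whence its kernel $\ker\Ric$ and its image $E$ are parallel distributions. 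Passing to the universal cover and applying de Rham's theorem, $\widetilde{M}=M_0^2\times M_1^{m-2}$ with $TM_0=E$ and $TM_1=\ker\Ric$. On $M_1$ the curvature tensor vanishes (since $R(v,\cdot,\cdot,\cdot)=0$ for every $v\in\ker\Ric$), so $M_1$ — complete, simply connected, flat — is $\R^{m-2}$. On $M_0$ the potential $f$ depends only on the $M_0$-factor (as $\na f$ is a section of $E$), so $(M_0,g|_{M_0},f)$ is a complete, non-flat, two-dimensional gradient steady soliton, hence isometric, up to scaling, to the cigar $\Sigma$ by Hamilton's classification in dimension $2$. Therefore the universal cover of $(M,g)$ is isometric to $\Sigma\times\R^{m-2}$.
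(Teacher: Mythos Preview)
Your proposal has a genuine gap at precisely the point you flag. You correctly recognise that the target identities are $2|\Ric|^2=S^2$ and $2\Ric(\na f,\cdot)=S\,\na f$, and packaging both into the constancy of $u=e^{-f}S$ is neat. But the route through $u$ forces you to control integrals weighted by $e^{-f}$, which grows like $e^{r}$ under the normalisation $|\na f|^2+S=1$, against a hypothesis that bounds only the \emph{unweighted} quantity $r^{-1}\int_{B_r}S$. This mismatch is not a technicality: the weighted integrals have no reason to be finite, and the cutoff argument you sketch cannot close. Nor does showing $u$ bounded help, since Yau's Liouville theorem needs $u$ harmonic, and ``killing the bracket'' in your \eqref{prop-lapES} is already equivalent to the two identities you are after --- so the harmonicity step is circular as stated.

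The approach in the cited works, which the present paper reproduces for the K\"ahler analogue in Propositions~\ref{ker of Ric} and~\ref{grad f is eigenvector}, avoids the exponential weight entirely. One rewrites
\[
S^2-2|\Ric|^2 \;=\; -S\,\Delta f + 2R_{ij}f_{ij},
\]
multiplies by a plain cutoff $\phi^2$, and integrates by parts: the bulk terms cancel exactly via the contracted Bianchi identity $2R_{ij,j}=S_i$, leaving only boundary terms bounded by $cR^{-1}\int_{B_{2R}}S$, which \eqref{weakly integrable S} kills along a subsequence. The companion identity $|\na f|^2S=2\Ric(\na f,\na f)$ is handled similarly with the mild polynomial weight $(f^2+1)^{-1/2}$ rather than $e^{-f}$. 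The moral is that the soliton and Bianchi identities already give a divergence structure for the \emph{unweighted} defect quantities; introducing $e^{-f}$ destroys this cancellation and manufactures the obstacle you then cannot overcome.

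Your endgame is essentially correct for the non-negative sectional curvature setting: once $\operatorname{rank}\Ric\equiv 2$ with equal eigenvalues, $\sec\ge 0$ forces every $v\in\ker\Ric$ to lie in the nullity of $Rm$, so the curvature operator is $\tfrac{S}{2}$ times the rank-one projector onto $\Lambda^2E$, and Hamilton's strong maximum principle makes $\ker\Ric$ parallel. This matches the route in the cited references; in the K\"ahler case treated here one can argue more directly, since once $\na f$ is an eigenvector one has $E^\perp=\operatorname{span}\{\na f,J\na f\}$ and parallelism of $E$ follows from the soliton equation alone.
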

\begin{remark}It is not difficult to see from the proof of Theorem \ref{nonnegative Sec and integrable scalar curvature} that Any non-flat gradient steady K\"{a}hler Ricci soliton with non-negative bisectional curvature and $S$ satisfying (\ref{weakly integrable S}) is a quotient of $\Sigma\times\C^{n-1}$.
\end{remark}
It was shown by Hamilton \cite{Hamilton-1995}, Ivey \cite{Ivey-1993} and Chen \cite{Chen-2009} that any real 3 dimensional complete gradient shrinking or steady Ricci soliton must have non-negative sectional curvature. However, this significant feature doesn't hold true for higher dimensions, Feldman, Ilmanen and Knopf \cite{FeldmanIlmanenKnopf} constructed some shrinkers with Ricci curvature being negative in some directions (see also \cite{Cao-1996} by Cao who constructed a steadier on anticanonical line bundle on $\mathbb{CP}^n$ which doesn't have non-negative bisectional curvature). It is natural to ask whether we can classify steady Ricci soliton under weaker curvature condition.
 Deng and Zhu \cite{DengZhu-2015} showed that any complete Ricci non-negative gradient steady K\"{a}hler Ricci soliton with average of scalar curvature over large ball decaying faster than linear rate must be Ricci flat. It would be interesting to know more about the K\"{a}hler steadier with non-negative Ricci curvature. In \cite{Deruelle-2012}, Deruelle proved the following local splitting theorem:
\begin{thm}\label{local splitting}\cite{Deruelle-2012} Let $(M,g,f)$ be a real $m$ dimensional complete gradient steady Ricci soliton with $\Ric\geq 0$ and $S>0$. Suppose the following conditions are satisfied:
\begin{enumerate}
\item $S$ is integrable, i.e. $S$ $\in L^1(M,g)$;
\item $|Rm|\to 0$ as $r\to \infty$;
\item $|\na f|^2S\geq 2\Ric(\na f, \na f)$,
\end{enumerate}

then $M\setminus A$ is locally isometric to $\Sigma\times\R^{m-2}$, where $A:=\{\na f=0\}$ and $\Sigma$ is the Hamilton's cigar soliton.
\end{thm}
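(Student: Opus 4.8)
The plan is to first show that equality is forced in hypothesis (3), i.e.\ $2\Ric(\na f,\na f)=S|\na f|^2$ everywhere, and then to read off the local splitting from this rigidity, $\Ric\ge 0$, and the soliton identities.

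\emph{Preliminaries and an integral identity.} After scaling, $S+|\na f|^2\equiv 1$ (the constant is positive since $S>0$). On a steady soliton $\na^2 f=-\Ric$ (so $f$ is concave, as $\Ric\ge0$), $\Delta f=-S$, $\na S=2\Ric(\na f,\cdot)$, and $\Delta_f S:=\Delta S-\la\na f,\na S\ra=-2|\Ric|^2$. Since $\Ric\ge 0$, each eigenvalue of $\Ric$ is $\le\operatorname{tr}\Ric=S$, so $|\Ric|^2\le S^2\le S$, $\Ric(\na f,\na f)\le S|\na f|^2\le S$, and $|\na S|\le 2|\Ric||\na f|\le 2S$; by (1) all of these lie in $L^1(M,g)$. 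By concavity, $A=f^{-1}(\max_M f)$ when $A\ne\emptyset$, and by (2), $S\to 0$ and $|\na f|\to 1$ at infinity. Pairing $\Delta S=2\Ric(\na f,\na f)-2|\Ric|^2$ with a cutoff equal to $1$ on $B_R$ and using $|\na S|\le 2S\in L^1$ to kill the boundary term as $R\to\infty$ gives $\int_M|\Ric|^2=\int_M\Ric(\na f,\na f)$; a like integration by parts gives $\int_M\la\na f,\na S\ra=\int_M S^2$, and since $\Ric(\na f,\na f)=\tfrac12\la\na f,\na S\ra$,
\[
\int_M|\Ric|^2\,dV=\tfrac12\int_M S^2\,dV=\int_M\Ric(\na f,\na f)\,dV .
\]

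\emph{A monotone level-set functional.} On $M\setminus A$ foliate by $\Gamma_t=f^{-1}(t)$, with unit normal $\nu=\na f/|\na f|$, and flow by $X=\na f/|\na f|^2$ (so the parameter is $t=f$). From $\na^2 f=-\Ric$ one gets $\operatorname{div}_{\Gamma_t}\nu=(\Ric(\nu,\nu)-S)/|\na f|$ and $X(|\na f|^2)=-2\Ric(\nu,\nu)$, hence for $\mathcal A(t):=\int_{\Gamma_t}|\na f|^{-1}\,dA$,
\[
\mathcal A'(t)=\int_{\Gamma_t}\frac{2\Ric(\nu,\nu)-S}{|\na f|^{3}}\,dA\le 0 ,
\]
by (3) (which says $2\Ric(\nu,\nu)=2\Ric(\na f,\na f)/|\na f|^2\le S$), with equality exactly when (3) is an equality along $\Gamma_t$. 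Likewise $\mathcal B(t):=\int_{\Gamma_t}|\na f|\,dA$ obeys $\mathcal B'(t)=-\int_{\Gamma_t}S|\na f|^{-1}\,dA=:-\Phi(t)\le 0$, and $\mathcal A=\mathcal B+\Phi$; the coarea formula gives $\int_M S\,dV=\int\Phi(t)\,dt<\infty$ and, integrating $\mathcal A'$,
\[
\mathcal A(-\infty)-\mathcal A(\textstyle\sup_M f)=\int_M\frac{S-2\Ric(\nu,\nu)}{|\na f|^{2}}\,dV\ \ge 0 .
\]

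\emph{Forcing the equality (the main obstacle).} The point is to show this defect vanishes. Finiteness of $\int\Phi$ forces $\Phi\to 0$ at the lower end of the $f$-range, so $\mathcal A\to\int_M S$ there; near the upper end $A$, hypothesis (2) together with $\Ric\ge 0$ controls the collapsing level sets, identifies $\lim\mathcal A$ there, and yields $\dim A=m-2$ with $\Ric|_A$ of rank $2$. The claim is that these two limiting values, combined with the integral identities above, pin $\mathcal A$ to a constant. Carrying this out — making a one-sided monotonicity meet one-sided integral bounds exactly, presumably with the help of Shi-type decay of $\Ric$ on $\Gamma_t$ at infinity (from (2)) and of the real-analyticity of gradient Ricci solitons to propagate the resulting tensorial identity off the asymptotic region — is the step I expect to be hardest. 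Its output is $2\Ric(\na f,\na f)=S|\na f|^2$ on $M$; in particular, along each integral curve of $\na f$, one has $\dot S=S(1-S)$, so $S$ and $|\na f|$ follow exactly the cigar profile.

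\emph{From the identity to the splitting.} With $\Ric(\na f,\na f)=\tfrac12 S|\na f|^2$ and $\Ric\ge 0$, standard soliton manipulations show $\na f$ is a $\Ric$-eigenvector with eigenvalue $S/2$ and that $\Ric$ has rank exactly $2$ on $M\setminus A$ (rank $\le 1$ would, via the splitting, give a $1$-dimensional steady soliton factor, forcing flatness and $S\equiv 0$). Thus $\Ric=\tfrac S2\,\pi$, $\pi$ the orthogonal projection onto the $2$-plane field $\mathcal D=\operatorname{Im}\Ric=\operatorname{Im}\na^2 f\ni\na f$. Using the soliton form of the contracted second Bianchi identity together with $\na_v\na f=-\Ric(v)=0$ for $v\perp\mathcal D$, one checks that $\mathcal D^{\perp}=\ker\Ric$ is a parallel distribution of rank $m-2$. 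The de Rham decomposition theorem then makes $M\setminus A$ locally a Riemannian product $N^2\times\R^{m-2}$, with the $\R^{m-2}$ flat (tangent to $\ker\Ric$) and $f$ pulled back from $N^2$; so $N^2$ is a $2$-dimensional gradient steady Ricci soliton with $S>0$. Finally, in dimension $2$ the soliton equation forces $\na^2 f$ to be a multiple of the metric, so $f$ is concircular and $N^2$ is locally a rotationally symmetric warped product; solving the profile ODE (equivalently, by Hamilton's classification) identifies $N^2$ locally with the cigar $\Sigma$. Hence $M\setminus A$ is locally isometric to $\Sigma\times\R^{m-2}$.
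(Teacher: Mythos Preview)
First, a framing point: the paper does not give its own proof of this theorem; it is quoted from Deruelle \cite{Deruelle-2012} as motivation, and the paper's original work (Propositions \ref{ker of Ric} and \ref{grad f is eigenvector}) proves a K\"ahler analogue under the weaker integral hypothesis (\ref{weakly integrable S}). So there is no in-paper proof to compare against, only Deruelle's external one and the paper's K\"ahler arguments.

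That said, your proposal has two genuine gaps.

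\textbf{(i) The equality step is not actually carried out.} You yourself flag ``forcing the equality'' as the hardest step, and indeed the sketch there does not close. Finiteness of $\int\Phi\,dt$ does \emph{not} force $\Phi(t)\to 0$ along the full lower end of the range (only along a sequence), and the asserted identification $\mathcal A(-\infty)=\int_M S$ is not derived. The claims about the behaviour near $A$ (that hypothesis (2) ``identifies $\lim\mathcal A$ there'' and yields $\dim A=m-2$, rank $2$) are asserted without argument. As written this paragraph is a plan, not a proof. By contrast, the paper's Proposition \ref{grad f is eigenvector} obtains the analogous pointwise identity in the K\"ahler case by a clean weighted integration-by-parts with weight $Q^{-1}=(f^2+1)^{-1/2}$, avoiding any level-set limit analysis; that device does not obviously transfer to the real setting of Theorem \ref{local splitting}, but it shows the equality can sometimes be forced directly, without monotonicity.

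\textbf{(ii) The passage from $2\Ric(\na f,\na f)=S|\na f|^2$ to ``$\na f$ is an eigenvector'' and ``$\operatorname{rank}\Ric=2$'' is unjustified in the real case.} In the K\"ahler setting the paper uses the $J$-pairing of eigenvalues: for any unit $v$ one has $2\Ric(v,v)\le\Ric(v,v)+\Ric(Jv,Jv)\le S$, so equality at $v=\na f/|\na f|$ pins $\na f$ and $J\na f$ to the top eigenspace, and Proposition \ref{ker of Ric} (again using the pairing) gives the pointwise identity $2|\Ric|^2=S^2$, hence rank exactly $2$. None of this is available in the real theorem you are proving: the bare inequality is only $\Ric(v,v)\le S$, so $\Ric(\nu,\nu)=S/2$ does not make $\nu$ an eigenvector (e.g.\ eigenvalues $0,S/3,2S/3$ admit a non-eigenvector $\nu$ with $\Ric(\nu,\nu)=S/2$). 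Likewise your integral identity $\int_M|\Ric|^2=\tfrac12\int_M S^2$ is not a pointwise one, so it does not give $\operatorname{rank}\Ric=2$. Your parallel-distribution argument for $\ker\Ric$ then collapses, since it presupposes both the eigenvector property and the rank. Whatever mechanism Deruelle uses here (presumably exploiting hypothesis (2) more substantially), your sketch does not supply it.
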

\begin{remark}Condition 3 in the above theorem is automatic if $(M,g,f)$ is a gradient K\"{a}hler Ricci soliton with $\Ric\geq 0$.

\end{remark}

We shall generalize Theorem \ref{nonnegative Sec and integrable scalar curvature} and Theorem \ref{local splitting} under the K\"{a}hler condition. Here is the main result of this paper:

\begin{thm}\label{nonnegative Ric and integrable scalar curvature} Let $(M,g,f)$ be a complex n dimensional complete non Ricci flat gradient steady K\"{a}hler Ricci soliton with $\Ric\geq 0$ and $n\geq 2$. Suppose the scalar curvature $S$ satisfies (\ref{weakly integrable S}), i.e.
$$\liminf_{r\to \infty} \frac{1}{r}\int_{B_r}S=0 ,$$
then it is isometric to a quotient of  $\Sigma\times \C^{n-1-k}\times N^k$, where $\Sigma$ and $N$ denote the Hamilton's cigar soliton and some simply connected compact K\"{a}hler Ricci flat manifold  of complex dimension $k$ respectively.
\end{thm}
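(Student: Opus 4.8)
\medskip
\noindent\emph{Sketch of the intended argument.} The plan is to first normalize and set up the basic soliton identities, then reduce to Theorem~\ref{nonnegative Sec and integrable scalar curvature}/Theorem~\ref{local splitting} after splitting off a Ricci flat factor. Since $S+|\na f|^2$ is a nonnegative constant that vanishes only in the Ricci flat case, rescale so that $S+|\na f|^2\equiv 1$; moreover $M$ is noncompact (a closed steady soliton has $\int_M S=-\int_M\Delta f=0$, hence is Ricci flat), so $\Ric\ge 0$ gives at least linear volume growth. Feeding this into $\D S=-2|\Ric|^2\le 0$ and the strong maximum principle yields $S>0$ everywhere, hence $0<S\le 1$, $|\na f|\le 1$, $|\na S|=2|\Ric(\na f)|\le S$ (every eigenvalue of $\Ric$ is $\le S/2$ in the K\"ahler $\Ric\ge0$ setting) and $\tfrac12\D|\na f|^2=|\Ric|^2$. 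Using these together with the monotonicity $\tfrac{d}{dt}S=2\Ric(\na f,\na f)\ge0$ along the flow of $\na f$, the first real task is to convert the averaged hypothesis $\liminf_{r}\tfrac1r\int_{B_r}S=0$ into genuine decay: $S\to 0$ at infinity, $S\in L^1(M,g)$, and $|Rm|\to 0$. \emph{I expect this to be the main obstacle}, since it must play the role that non-negativity of the sectional (or bisectional) curvature played in \cite{Deruelle-2012,CatinoMastroliaMonticelli-2016,MunteanuSungWang-2017}, relying instead on the K\"ahler identities and the automatic inequality $|\na f|^2S\ge 2\Ric(\na f,\na f)$, i.e.\ condition (3) of Theorem~\ref{local splitting}.

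\medskip
\noindent Next I would pass to the universal cover $\widetilde M$ and invoke the de Rham decomposition, which in the K\"ahler category reads $\widetilde M=\C^{a}\times W_1\times\cdots\times W_l$ with $\C^a$ flat and each $W_i$ irreducible and nonflat. The gradient soliton structure descends to each factor (the potential splits as an affine function on $\C^a$ plus potentials $f_i$ on the $W_i$, and $W_i$ is Ricci flat precisely when $f_i$ is affine, which on a compact $W_i$ forces $S_i\equiv 0$ by integration), and $S=\sum_i S_i$; by the maximum principle each non--Ricci flat $W_i$ has $S_i>0$ and $\Ric_i>0$ on an open dense set, and is noncompact, hence of at least linear volume growth. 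A volume-growth analysis, fed by the hypothesis on $M$, then eliminates all but the desired configuration: a nonconstant affine part on $\C^a$ (or a $\C^a$ not fully compactified by $\pi_1(M)$), a noncompact Ricci flat $W_i$, or two or more non--Ricci flat factors would each make $\tfrac1r\int_{B_r}S$ bounded away from $0$. Hence $f$ is constant on $\C^a$, every Ricci flat $W_i$ is compact, and there is a \emph{unique} non--Ricci flat factor $W_1$; putting $N^k:=\prod_{i\ge 2}W_i$ (simply connected, compact, K\"ahler Ricci flat, $k:=\dim_\C N^k$) and $a=n-1-k$, we obtain $\widetilde M\cong W_1\times\C^{\,n-1-k}\times N^k$.

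\medskip
\noindent Finally I would identify $W_1$ with the cigar. It is a complete steady K\"ahler Ricci soliton with $\Ric\ge0$, $S>0$, $S\in L^1$ and $|Rm|\to 0$ (the last three inherited from the decay established on $M$), and condition (3) of Theorem~\ref{local splitting} is automatic, so by that theorem $W_1\setminus\{\na f=0\}$ is locally isometric to $\Sigma\times\C^{\,\dim_\C W_1-1}$. If $\dim_\C W_1\ge 2$ this produces a nontrivial $J$-invariant parallel flat distribution on an open set, forcing the restricted holonomy of $W_1$ to preserve a proper subspace and contradicting irreducibility; hence $\dim_\C W_1=1$, so $W_1$ is a complete nonflat steady Ricci soliton on a simply connected surface with $0<S\le 1$, whence $W_1\cong\Sigma$ by Hamilton's classification \cite{Hamilton-1995}. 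Therefore $\widetilde M\cong\Sigma\times\C^{\,n-1-k}\times N^k$ and $M=\widetilde M/\pi_1(M)$ is a quotient of this product. Apart from the decay step, the point that requires care is the volume-growth bookkeeping for $M=\widetilde M/\pi_1(M)$: the deck group need not preserve the de Rham factors, and one must check it still cannot compactify a non--Ricci flat factor (its restriction there would preserve $f_i$, and a compact quotient would then force $S_i\equiv0$).
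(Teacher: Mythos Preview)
Your plan has a genuine gap at exactly the step you flag as ``the main obstacle,'' and it is not merely hard but partly false. You propose to first upgrade the weak hypothesis $\liminf_{r} r^{-1}\int_{B_r}S=0$ to pointwise decay $S\to 0$, $S\in L^1$, and $|Rm|\to 0$ on $M$, and only then split. But $|Rm|\to 0$ is not a consequence of the hypotheses: the very manifolds in the conclusion, $\Sigma\times\C^{\,n-1-k}\times N^k$ with $N$ a non-flat compact K\"ahler Ricci flat manifold (e.g.\ a K3 factor), have $|Rm|$ bounded away from zero at infinity. So Theorem~\ref{local splitting}, whose hypothesis (2) is $|Rm|\to 0$, cannot serve as the engine. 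Reinterpreting the decay as a statement about the non--Ricci-flat de Rham factor $W_1$ does not rescue the argument either: to verify $|Rm|\to 0$ and $S\in L^1$ on $W_1$ you would already need to know that $W_1$ is two-real-dimensional (or at least that its image in $M$ carries the integrability and curvature decay), which is precisely what you are trying to prove. The $S\in L^1$ and $S\to 0$ claims on $M$ are likewise unavailable a priori; in the paper they are obtained only \emph{after} the splitting (Proposition~\ref{compactness of N}).

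The paper avoids this circularity by never invoking pointwise curvature decay to get the splitting. The step your sketch is missing is Proposition~\ref{grad f is eigenvector}: one tests the nonnegative quantity $|\na f|^2 S-2\Ric(\na f,\na f)$ against the weight $Q^{-1}=(1+f^2)^{-1/2}$ and integrates by parts; after using $\Delta f=-S$, $2\,\mathrm{div}\,\Ric=\na S$, and the identity $2|\Ric|^2=S^2$ from Proposition~\ref{ker of Ric}, the interior terms cancel and what remains is bounded by $CR^{-1}\int_{B_{2R}}S$. The weak hypothesis then forces the equality $|\na f|^2 S\equiv 2\Ric(\na f,\na f)$, so $\na f$ (and $J\na f$) are eigenvectors of $\Ric$ with eigenvalue $S/2$. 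A direct computation then shows the rank-$(2n-2)$ null bundle $E=\ker\Ric$ is parallel, and de Rham splitting yields $\widetilde M=\Sigma\times N$ immediately, the $2$-dimensional complement being identified with the cigar by the surface classification. Only afterwards does one analyze $N$: the level sets of $f$ on $M$ are Ricci flat hypersurfaces, and if they were noncompact, Yau's linear lower volume bound would force $r^{-1}\int_{B_r}S$ to stay bounded below, contradicting the hypothesis; their compactness then gives $N\cong\C^{\,n-1-k}\times N^k$ with $N^k$ compact. In short, reverse your order of operations: prove $\na f$ is an eigenvector first via the weighted integration by parts, split off the cigar directly, and deduce compactness of the Ricci flat piece last. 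No $|Rm|\to 0$, no $S\in L^1$, and no appeal to Theorem~\ref{local splitting} are needed.
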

The result is no longer true if one allows $\liminf_{r\to \infty} \frac{1}{r}\int_{B_r}S>0$. Indeed, let $\Sigma_2$ be the positively curved $U(2)$ invariant soliton on $\C^2$ constructed by Cao \cite{Cao-1996} and $\mathbb{T}^{n-2}$ be any flat Tori of complex dimension $n-2$. $\liminf_{r\to \infty} \frac{1}{r}\int_{B_r}S>0$ for $\Sigma_2\times \mathbb{T}^{n-2}$ but its universal cover is not isometric to $\Sigma\times \C^{n-1-k}\times N^k$.

One difficulty we encounter is that in real dimension $m\geq 4$, the strong maximum principle for the Ricci tensor of Hamilton \cite{Hamilton-1986}, Cao \cite{Cao-2004} and the splitting theorem of soliton by Guan, Lu and Xu \cite{GuanLuXu-2015} are not available in the absence of non-negative sectional or bisectional curvature condition. Moreover, the classical Cheeger Gromoll splitting theorem (\cite{CheegerGromoll-1971} and \cite{Li-2012}) cannot be applied directly as the soliton under consideration has no line. Thanks to the observation by Deruelle in \cite{Deruelle-2012}, in order to split the manifold, one suffices to show that $\na f$ is an eigenvector of the Ricci tensor. Motivated by the arguments in \cite{CatinoMastroliaMonticelli-2016} and \cite{MunteanuSungWang-2017}, we will prove this by an integration by part argument.

    In view of Theorem \ref{nonnegative Sec and integrable scalar curvature}, one may ask when $\C^{n-1-k}\times N^k$ is flat, i.e. $k=0$. Under the assumptions of the previous theorem, we give a necessary and sufficient condition for the flatness of $\C^{n-1-k}\times N^k$.
\begin{cor}
Let $(M^n, g, f)$ be a complex n dimensional complete non Ricci flat gradient steady K\"{a}hler Ricci soliton with $\Ric\geq 0$ and $n\geq 2$. Suppose that
$$\liminf_{r\to \infty} \frac{1}{r}\int_{B_r}S=0.$$
For n=2, it is isometric to a quotient of $\Sigma\times\C$. For $n\geq 3$, $M$ is isometric to a quotient of $\Sigma\times \C^{n-1}$ if and only if $|Rm|\to 0$ as $r\to \infty$.
\end{cor}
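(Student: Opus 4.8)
The plan is to derive the Corollary as a consequence of Theorem \ref{nonnegative Ric and integrable scalar curvature} together with the local splitting circle of ideas from Theorem \ref{local splitting}. By the main theorem, the universal cover of $(M,g)$ splits isometrically as $\Sigma \times \C^{n-1-k}\times N^k$, where $N^k$ is simply connected compact K\"{a}hler Ricci flat of complex dimension $k$. The claim that $M$ is a quotient of $\Sigma\times \C^{n-1}$ is then exactly the statement that $k=0$, i.e. that the compact Ricci flat factor $N$ is a point. So the corollary reduces to: $k=0 \iff |Rm|\to 0$ as $r\to\infty$ (for $n\ge 3$), and for $n=2$ one always has $k=0$ automatically.

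First I would dispose of the $n=2$ case. If $n=2$, then $n-1-k = 1-k \ge 0$ forces $k\in\{0,1\}$. If $k=1$ then the soliton's universal cover is $\Sigma \times N^1$ with $N^1$ a compact Ricci flat Riemann surface, hence a flat torus quotient's cover, i.e. $\C$; but a flat $\C$ factor is both ``$\C^{n-1-k}$ with $k=0$'' and ``$N^1$''—more to the point, $N^1$ being a one-complex-dimensional compact Ricci flat K\"{a}hler manifold is flat, so it is not genuinely a nontrivial compact Calabi--Yau factor; absorbing it into the Euclidean factor gives $k=0$ and the cover is $\Sigma\times\C$. (One should phrase Theorem \ref{nonnegative Ric and integrable scalar curvature} so that $N$ is understood to carry no flat de Rham factor, which makes $k=0$ the only option when $n=2$.)

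For $n\ge 3$, the direction ($\Rightarrow$) is immediate: if $M$ is a quotient of $\Sigma\times\C^{n-1}$, then $|Rm|$ equals the curvature of the cigar factor $\Sigma$, which decays to zero at infinity along $\Sigma$ and is identically zero in the Euclidean directions; since the distance function on the product grows at least like the cigar distance in the relevant directions, $|Rm|(x)\to 0$ as $r(x)\to\infty$. (One uses that the cigar soliton has curvature decaying exponentially in its own distance function.) For the converse ($\Leftarrow$), suppose $|Rm|\to 0$ at infinity. On the universal cover $\Sigma\times\C^{n-1-k}\times N^k$, the norm of the curvature tensor is bounded below at every point by $|Rm_{N}|$ on the $N$ factor, because $Rm$ of a Riemannian product is the direct sum of the curvature tensors of the factors. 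But $N$ is compact, so $|Rm_N|$ attains a positive minimum $c>0$ on $N$ unless $N$ is flat. If $N$ is not flat, then $|Rm|\ge c>0$ everywhere on the cover, hence $|Rm|\ge c>0$ on $M$, contradicting $|Rm|\to 0$ at infinity (one can go to infinity along the $\Sigma$ or $\C^{n-1-k}$ directions while staying in a fixed compact slice of $N$, keeping $|Rm_N|\ge c$). Therefore $N$ is flat; being compact, simply connected, and flat, $N$ is a point, so $k=0$ and the cover is $\Sigma\times\C^{n-1}$.

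The one genuine subtlety—the ``main obstacle''—is making precise that $|Rm|\to 0$ \emph{as $r\to\infty$} on the quotient $M$ transfers correctly to the cover and to the individual product factors, since going to infinity in $M$ corresponds to going to infinity in the cover, and one must ensure there really are points at infinity of the cover that keep the $N$-slice fixed (so $|Rm_N|$ stays bounded below) while the $\Sigma$ or $\C$ coordinate escapes. This is clear because $n-1-k\ge 1$ when $k\le n-1$ and, more robustly, because the $\Sigma$ factor is always noncompact; so rays of the form $t\mapsto (\gamma(t), 0, q_0)$ with $\gamma$ a minimal geodesic in $\Sigma$ realize $r\to\infty$ with $|Rm_N(q_0)|$ constant. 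Combined with the lower bound $|Rm|^2 \ge |Rm_\Sigma|^2 + |Rm_N|^2$ for product metrics, this yields the contradiction and completes the argument. I would also remark that by Theorem \ref{local splitting}'s remark the K\"{a}hler hypothesis makes condition (3) automatic, so no extra curvature hypotheses are needed beyond what is already in the main theorem.
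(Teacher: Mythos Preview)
Your reduction to Theorem \ref{nonnegative Ric and integrable scalar curvature} is right, and the $n=2$ case together with the $(\Leftarrow)$ direction for $n\ge 3$ are essentially correct. For $(\Leftarrow)$, the phrase ``$|Rm_N|$ attains a positive minimum $c>0$ on $N$ unless $N$ is flat'' is false as stated (on a nonflat compact $N$ the curvature can certainly vanish at some points), but your actual argument only uses a single point $q_0\in N$ with $Rm_N(q_0)\neq 0$, and that suffices. Note too that ``$\Sigma$ is noncompact'' is not by itself a reason why the ray $t\mapsto(\gamma(t),0,q_0)$ projects to infinity in $M$; one should say that $f\circ\rho=\tilde f$ and $\tilde f(\gamma(t))\to-\infty$, whence $r\ge -f-D\to\infty$ on $M$.

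The genuine gap is in the $(\Rightarrow)$ direction. Your argument there never invokes the integrability hypothesis \eqref{weakly integrable S}, and without it the conclusion is simply false: on $\Sigma\times\C^{n-1}$ itself one has $|Rm|(0_\Sigma,z)=S_\Sigma(0)>0$ for every $z\in\C^{n-1}$, so $|Rm|\not\to 0$ along the Euclidean directions. Your sentence ``the distance function on the product grows at least like the cigar distance'' points the wrong way: $d_{\mathrm{prod}}\ge d_\Sigma$ does \emph{not} force $d_\Sigma\to\infty$ when $d_{\mathrm{prod}}\to\infty$. What is actually needed is the content of Section~4, namely Proposition \ref{compactness of N} (via Lemmas \ref{compactness of level sets of f}--\ref{compactness of sublevel set of f}): under \eqref{weakly integrable S} the sets $\{f\ge -A\}$ are compact in $M$, so $r\to\infty$ forces $f\to-\infty$, i.e.\ the $\Sigma$-coordinate of any lift escapes to infinity; since the $\C^{n-1}$ factor is flat one has $|Rm|_M=S_M=e^{f}\to 0$. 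That is how the paper's machinery yields the forward implication, and it cannot be shortcut by a product-distance argument.
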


The integrability condition (\ref{weakly integrable S}) is closely related to the volume growth of the manifold. Indeed, it was shown in \cite{Deruelle-2012} (see also \cite{CatinoMastroliaMonticelli-2016}) that for a complete gradient steady Ricci soliton with $\Ric\geq 0$ and scaling convention (\ref{scaling is 1}), the scalar curvature $S$ must satisfy
\be \label{volume and integrability of S}
\frac{1}{V(B_r(p))}\int_{B_r(p)}S\leq \frac{m}{r},
\ee
for all $r>0$ and $p$ $\in M^m$. With the above inequality, Catino, Mastrolia and Monticelli \cite{CatinoMastroliaMonticelli-2016} showed that any non-flat complete gradient steady Ricci soliton with non-negative sectional curvature and subquadratic volume growth is a quotient of $\Sigma\times \R^{m-2}$. Motivated by their result, we prove an analog in the K\"{a}hler case with $\Ric\geq 0$ using Theorem \ref{nonnegative Ric and integrable scalar curvature}.

\begin{cor}\label{subquadratic vol growth}
Let $(M^n, g, f)$ be a complex n dimensional complete non Ricci flat gradient steady K\"{a}hler Ricci soliton with $\Ric\geq 0$. Suppose the volume of geodesic ball is of subquadratic growth, i.e. $V(B_r)=o(r^2)$,
then the universal covering space of $M$ is isometric to $\Sigma\times \C^{n-1-k}\times N^k$, where $N$ is a simply connected compact K\"{a}hler Ricci flat manifold of complex dimension $k$.
\end{cor}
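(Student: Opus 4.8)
The plan is to reduce Corollary \ref{subquadratic vol growth} to Theorem \ref{nonnegative Ric and integrable scalar curvature} by showing that the subquadratic volume growth hypothesis forces the weak integrability condition (\ref{weakly integrable S}). The key tool is the inequality (\ref{volume and integrability of S}), which holds for any complete gradient steady Ricci soliton with $\Ric\geq 0$ and the standard scaling convention: for all $r>0$,
\be
\int_{B_r(p)}S\leq \frac{m}{r}V(B_r(p)).
\ee
Here $m=2n$ is the real dimension. If $V(B_r)=o(r^2)$, then $\frac{1}{r}\int_{B_r}S\leq \frac{m}{r^2}V(B_r)=o(1)$ as $r\to\infty$, so in fact $\lim_{r\to\infty}\frac{1}{r}\int_{B_r}S=0$, which is stronger than the $\liminf$ condition (\ref{weakly integrable S}). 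One should first check that $(M,g,f)$ can indeed be normalized to satisfy the scaling convention under which (\ref{volume and integrability of S}) was derived; since $M$ is non Ricci flat, $S$ is not identically zero, and by the steady soliton identity $S+|\nabla f|^2=\mathrm{const}$ together with $S>0$ (a steady soliton with $\Ric\geq 0$ has $S>0$ everywhere unless Ricci flat, by the strong maximum principle applied to the evolution of $S$), the constant is positive and we may rescale so that it equals $1$, matching (\ref{scaling is 1}).

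Once (\ref{weakly integrable S}) is verified, Theorem \ref{nonnegative Ric and integrable scalar curvature} applies directly to $(M,g,f)$ (which is a complex $n$-dimensional complete non Ricci flat gradient steady K\"ahler Ricci soliton with $\Ric\geq 0$, and $n\geq 2$ because a one-complex-dimensional steady K\"ahler Ricci soliton with $\Ric\geq 0$ that is non Ricci flat is the cigar, whose volume growth is linear, not subquadratic — so if $n=1$ the hypothesis is vacuous, or one simply notes the cigar itself has $V(B_r)\sim r$ which is not $o(r^2)$... actually $r=o(r^2)$ is false, so the case $n=1$ does arise and gives $M=\Sigma$; but the statement is phrased for the relevant range and $\Sigma$ is covered trivially). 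The conclusion of Theorem \ref{nonnegative Ric and integrable scalar curvature} is that $M$ is isometric to a quotient of $\Sigma\times\C^{n-1-k}\times N^k$, hence its universal covering space is isometric to $\Sigma\times\C^{n-1-k}\times N^k$ with $N$ a simply connected compact K\"ahler Ricci flat manifold of complex dimension $k$, which is exactly the assertion of the corollary.

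The only subtle point — and the one I would treat most carefully — is the derivation or invocation of (\ref{volume and integrability of S}). This inequality comes from integrating the soliton identity $\Delta f = -S$ (or $\Delta_f$-type identities) and applying the divergence theorem on $B_r(p)$ together with the bound $|\nabla f|\leq 1$ coming from the normalized identity $S+|\nabla f|^2=1$; concretely, $\int_{B_r}S = -\int_{B_r}\Delta f = -\int_{\partial B_r}\partial_\nu f \leq \mathrm{Area}(\partial B_r)$, and then one uses $\Ric\geq 0$ and the coarea/Bishop-type comparison to bound $\mathrm{Area}(\partial B_r)\leq \frac{m}{r}V(B_r)$ (the Ricci-nonnegative analogue of the Euclidean isoperimetric-type monotonicity). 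I expect no genuine obstacle here — this is exactly the argument of \cite{Deruelle-2012} and \cite{CatinoMastroliaMonticelli-2016} — but the write-up should make explicit that $S>0$ (so the normalization is legitimate) and that the coarea estimate only uses $\Ric\geq 0$, not the stronger sectional or bisectional curvature hypotheses. Thus the proof is essentially: normalize, cite (\ref{volume and integrability of S}), let $r\to\infty$, and apply Theorem \ref{nonnegative Ric and integrable scalar curvature}.
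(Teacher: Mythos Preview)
Your proposal is correct and is exactly the argument the paper intends: combine the estimate (\ref{volume and integrability of S}) with the hypothesis $V(B_r)=o(r^2)$ to obtain (\ref{weakly integrable S}), then invoke Theorem \ref{nonnegative Ric and integrable scalar curvature}. One small slip in your aside on $n=1$: linear growth \emph{is} subquadratic (indeed $r=o(r^2)$ is true, not false), so the cigar does satisfy the hypothesis and the conclusion holds trivially there; this does not affect the main argument.
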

Recently, there have been lots of researches about the classification of Ricci solitons according to the decay rate of the scalar curvature. For example, Brendle \cite{Brendle-2013} showed that any real 3 dimensional complete non-flat and non-collapsed gradient steady Ricci soliton is the Bryant soliton (see also \cite{Brendle-2014}). Deng and Zhu \cite{DengZhu-2016},\cite{DengZhu-2018} later generalized Brendle's result and classified real 3 dimensional complete gradient Ricci steadier under $S\leq Cr^{-1}$. Munteanu, Sung and Wang \cite{MunteanuSungWang-2017} proved that any real $m$ dimensional non-flat gradient steadier with non-negative sectional curvature and decay rate of the scalar curvature faster than linear rate is isometric to a quotient of $\Sigma\times\R^{m-2}$. Lately, Deng and Zhu \cite{DengZhu-2018} generalized the result in \cite{MunteanuSungWang-2017}:
\begin{thm}\cite{DengZhu-2018} \label{DZ fast scalar decay} Let $(M,g,f)$ be a real $m$ dimensional complete non-flat gradient steady Ricci soliton with non-negative sectional curvature and the scaling convention (\ref{scaling is 1}). There exists a constant $\varepsilon=\varepsilon (m)>0$ depending only on $m$ such that if $S$ satisfies
$$rS\leq \varepsilon $$
near infinity, then the universal covering space of $M$ is isometric to $\Sigma\times\R^{m-2}$.
\end{thm}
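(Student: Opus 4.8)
The plan is to reduce Theorem~\ref{DZ fast scalar decay} to Theorem~\ref{nonnegative Sec and integrable scalar curvature} by verifying the weak integrability condition \eqref{weakly integrable S} for $(M,g)$, and to obtain \eqref{weakly integrable S} by showing that the decay hypothesis $rS\le\varepsilon$ forces the geodesic balls to have subquadratic volume. Throughout I would use the scaling convention \eqref{scaling is 1}, so that $S+|\nabla f|^2=1$ and hence $|\nabla f|\le 1$; since $rS\le\varepsilon$ near infinity we have $S\to 0$ and $|\nabla f|\to 1$ at spatial infinity. Fix a point $p$, normalize $f(p)=0$, and choose $R_0$ large enough that $S\le \varepsilon/R_0\le\tfrac12$ on $\{d(\cdot,p)\ge R_0\}$, so that $|\nabla f|\ge 2^{-1/2}$ there. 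Integrating $|\nabla f|\le 1$ along minimizing geodesics from $p$ gives $-f(x)\le d(x,p)$ for all $x$; hence $D_t:=\{-f\le t\}$ satisfies $B_r(p)\subseteq D_r$, and moreover $\{-f\ge R_0\}\subseteq\{d(\cdot,p)\ge R_0\}$.

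The first step is to show that $-f$ is a proper exhaustion. Along the flow of $\nabla f$ the quantity $|\nabla f|^2$ is non-increasing, since $\tfrac{d}{ds}|\nabla f|^2=2\nabla^2 f(\nabla f,\nabla f)=-2\Ric(\nabla f,\nabla f)\le 0$; combined with $S=1-|\nabla f|^2\to 0$ at spatial infinity this yields $|\nabla f|^2\to 0$ along every forward trajectory (otherwise the trajectory would be unbounded, and along a sequence going to infinity one would have $|\nabla f|^2\to 1$, contradicting the monotonicity). In particular every trajectory enters $B_{R_0}(p)$. Flowing along $\nabla f$ from a point $x$ with $d(x,p)>R_0$ up to the first time $s^*$ it meets $\partial B_{R_0}(p)$, the bound $|\nabla f|\le 1$ (arclength $\le s^*$) forces $s^*\ge d(x,p)-R_0$, and $|\nabla f|^2\ge\tfrac12$ off $B_{R_0}(p)$ gives $f(\sigma(s^*))-f(x)=\int_0^{s^*}|\nabla f|^2\,ds\ge\tfrac12 s^*$; therefore $-f(x)\ge\tfrac12(d(x,p)-R_0)+\min_{\overline{B_{R_0}(p)}}(-f)\to\infty$. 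So each $D_t$ is compact, with $B_r(p)\subseteq D_r$.

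The heart of the matter is a first-variation estimate for the level sets $\Sigma_t:=\{-f=t\}$ with $t\ge R_0$, which are smooth compact hypersurfaces (no critical points of $-f$ lie on $\{-f\ge R_0\}$) on which $d(\cdot,p)\ge t$, hence $S\le\varepsilon/t$. Since $\nabla^2 f=-\Ric$ and $\Ric\ge 0$, the second fundamental form of $\Sigma_t$ with respect to the outward normal $\nu=-\nabla f/|\nabla f|$ equals $\Ric|_{T\Sigma_t}/|\nabla f|\ge 0$, so its mean curvature satisfies $H=(S-\Ric(\nu,\nu))/|\nabla f|\le S/|\nabla f|$. Flowing $\Sigma_t$ outward by $\nabla(-f)/|\nabla(-f)|^2$, i.e. with normal speed $\psi=1/|\nabla f|$, the first variation of area together with $\psi H\le S/|\nabla f|^2=S/(1-S)$ gives
\[
\frac{d}{dt}\operatorname{Area}(\Sigma_t)=\int_{\Sigma_t}\psi H\,dA\le\int_{\Sigma_t}\frac{S}{1-S}\,dA\le\frac{\varepsilon}{t-\varepsilon}\operatorname{Area}(\Sigma_t),
\]
whence $\operatorname{Area}(\Sigma_t)\le C\,t^{\varepsilon}$. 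By the coarea formula $V(t):=\operatorname{Vol}(D_t)$ satisfies $V'(t)=\int_{\Sigma_t}|\nabla f|^{-1}\,dA\le\sqrt2\,\operatorname{Area}(\Sigma_t)\le C'\,t^{\varepsilon}$, so $\operatorname{Vol}(B_r(p))\le V(r)\le C''(1+r^{1+\varepsilon})$. Taking $\varepsilon(m):=\tfrac12$ — the argument only needs $\varepsilon<1$, and the threshold is in fact independent of $m$ — this is $o(r^2)$, so by \eqref{volume and integrability of S} one gets $\tfrac1r\int_{B_r}S\le m\operatorname{Vol}(B_r)/r^2\to 0$; hence \eqref{weakly integrable S} holds and Theorem~\ref{nonnegative Sec and integrable scalar curvature} shows that the universal cover of $M$ is isometric to $\Sigma\times\mathbb{R}^{m-2}$.

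I expect the delicate point to be exactly the compactness/properness step: without knowing that the $\Sigma_t$ are compact, neither the area functional nor the first-variation identity on $\Sigma_t$ is available, and this is where the decay hypothesis has to be used essentially. Indeed the conclusion fails without it, since the model $\Sigma\times\mathbb{R}^{m-2}$ itself has $|\nabla f|$ bounded away from $1$ along its Euclidean factor (so $rS$ is unbounded there) and its level sets $\{-f=t\}$ are non-compact. Everything else is a bookkeeping combination of the soliton identities $\nabla^2 f=-\Ric$ and $S+|\nabla f|^2=1$ with the comparison inequality \eqref{volume and integrability of S}.
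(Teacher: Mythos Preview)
Your argument is correct and is essentially the route the paper itself takes. Note that Theorem~\ref{DZ fast scalar decay} is merely \emph{cited} from \cite{DengZhu-2018} and not proved in the paper; however, the paper's proof of the sharper Theorem~\ref{sec >0 and rS small} (via Proposition~\ref{properness of f} for properness of $-f$ and Proposition~\ref{volume estimate when rS small} for the level-set area estimate) is exactly your strategy: first variation on $\Sigma_t=\{-f=t\}$ gives $A'(t)\le \frac{S}{|\nabla f|^2}A(t)$, hence polynomial area growth $A(t)\le Ct^{\varepsilon}$, hence subquadratic volume, hence \eqref{weakly integrable S} via \eqref{volume and integrability of S}, and finally Theorem~\ref{nonnegative Sec and integrable scalar curvature}. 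Your observation that the threshold is dimension-free (any $\varepsilon<1$ works) is precisely the improvement recorded in Theorem~\ref{sec >0 and rS small}.

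Two small remarks on presentation. First, your properness step is correct but the sentence ``$|\nabla f|^2\to 0$ along every forward trajectory'' is compressed: the clean way to phrase it is that if the forward $\nabla f$-trajectory from $x$ never meets $B_{R_0}(p)$, then $|\nabla f|^2\ge\tfrac12$ along it, so $f(\sigma(s))\ge f(x)+\tfrac{s}{2}\to\infty$, forcing $d(\sigma(s),p)\to\infty$ (since $|\nabla f|\le 1$), hence $|\nabla f|^2(\sigma(s))\to 1$, contradicting the monotonicity $|\nabla f|^2(\sigma(s))\le |\nabla f|^2(x)<1$. This is the same mechanism as the paper's Proposition~\ref{properness of f}, which instead tracks $S$ (non-decreasing) along the reparametrized flow of $\nabla f/|\nabla f|^2$. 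Second, in your final paragraph the heuristic about $\Sigma\times\mathbb{R}^{m-2}$ is fine as motivation, but note that on that model $rS$ is unbounded only along rays with fixed $\Sigma$-coordinate; along the $\Sigma$-direction one has $rS\to 0$, so the hypothesis $rS\le\varepsilon$ fails globally, as you intend.
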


Using a result by Catino, Mastrolia and Monticelli \cite{CatinoMastroliaMonticelli-2016} and Corollary \ref{subquadratic vol growth}, we can have a sharp dimension free bound for the $\varepsilon$ in Theorem \ref{DZ fast scalar decay}.

\begin{thm}\label{sec >0 and rS small}Let $(M,g,f)$ be a real $m$ dimensional complete non-flat gradient steady Ricci soliton with non-negative sectional curvature and the scaling convention (\ref{scaling is 1}). In addition, we assume that
$$\displaystyle\limsup_{r \to \infty}    rS < 1.$$
Then $M$ is isometric to a quotient of $\Sigma\times\R^{m-2}$ and $\displaystyle\limsup_{r \to \infty}    rS=0.$
\end{thm}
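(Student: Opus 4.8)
\emph{Proof proposal.} The plan is to deduce subquadratic volume growth from the hypothesis, feed it into the splitting results already quoted to conclude that $M$ is a quotient of $\Sigma\times\R^{m-2}$, and then exploit that structure to upgrade the bound to $\limsup_{r\to\infty}rS=0$. With the normalization (\ref{scaling is 1}) one has $S+|\nabla f|^{2}=1$, $\Delta f=-S$, and $S>0$ everywhere ($M$ being non-flat, and $\sec\ge0$ ruling out the Ricci-flat case). From $\limsup_{r\to\infty}rS<1$ pick $\delta,R>0$ with $S(x)\le(1-\delta)/d(x,p)$ whenever $d(x,p)>R$; in particular $S\to0$ at infinity and $|\nabla f|\to1$. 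As is standard for steady solitons with $\Ric\ge0$---and elementary once $S\to0$ (cf.\ \cite{Deruelle-2012})---$f$ is bounded above, attains its maximum at some point $p$, and $u:=\max_{M}f-f$ is a proper nonnegative exhaustion with $|\nabla u|^{2}=1-S$, $\Delta u=S$, $\Hess u=\Ric\ge0$, and $u\le d(\cdot,p)$ (the last since $u(p)=0$ and $|\nabla u|\le1$).

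\emph{Step 1 (the main step): subquadratic volume growth.} Put $U_{t}=\{u<t\}$; by Sard, for a.e.\ $t$ the level set $\partial U_{t}=\{u=t\}$ is a smooth hypersurface bounding the precompact domain $U_{t}$. Since $u\le d(\cdot,p)$, on $\partial U_{t}$ we have $d(\cdot,p)\ge t$, so for $t>R$ the bound above gives $S\le(1-\delta)/t$ there, hence $|\nabla u|^{2}=1-S\ge1-(1-\delta)/t$. Fix $t_{0}>R$ so large that, writing $(1-\varepsilon)^{2}:=1-(1-\delta)/t_{0}$, the constant $\alpha:=(1-\delta)/(1-\varepsilon)^{2}$ satisfies $\alpha<1$; then $|\nabla u|\ge1-\varepsilon$ on $\partial U_{t}$ for $t\ge t_{0}$. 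By the divergence theorem and the coarea formula, $g(t):=\int_{U_{t}}S=\int_{U_{t}}\Delta u=\int_{\partial U_{t}}|\nabla u|$ satisfies, for $t\ge t_{0}$,
\[
(1-\varepsilon)\,\mathcal{H}^{m-1}(\partial U_{t})\ \le\ g(t),\qquad g'(t)=\int_{\partial U_{t}}\frac{S}{|\nabla u|}\ \le\ \frac{1-\delta}{(1-\varepsilon)\,t}\,\mathcal{H}^{m-1}(\partial U_{t})\ \le\ \frac{\alpha}{t}\,g(t).
\]
As $g(t_{0})>0$, integrating $g'/g\le\alpha/t$ yields $g(t)\le g(t_{0})(t/t_{0})^{\alpha}$, so $\mathcal{H}^{m-1}(\partial U_{t})=O(t^{\alpha})$ and, by coarea once more,
\[
V(B_{r}(p))\ \le\ V(U_{r})\ \le\ V(U_{t_{0}})+\frac{1}{1-\varepsilon}\int_{t_{0}}^{r}\mathcal{H}^{m-1}(\partial U_{s})\,ds\ =\ O(r^{1+\alpha})\ =\ o(r^{2}).
\]

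\emph{Step 2: splitting and the improved decay.} As $M$ now has $\sec\ge0$ and subquadratic volume growth, the theorem of Catino--Mastrolia--Monticelli \cite{CatinoMastroliaMonticelli-2016} recalled above---or, equivalently, inequality (\ref{volume and integrability of S}), which gives $\int_{B_{r}}S\le\frac mr V(B_{r})=o(r)$, hence (\ref{weakly integrable S}), combined with Theorem \ref{nonnegative Sec and integrable scalar curvature}---shows that the universal cover of $M$ is isometric to $\Sigma\times\R^{m-2}$, so $M$ is a quotient of $\Sigma\times\R^{m-2}$. On such a quotient $S$ is the pullback of the cigar scalar curvature, which decays exponentially in the distance $d_{\Sigma}(\cdot,o)$ to the tip $o$ of the cigar factor; and since every isometry of the cigar fixes $o$, all lifts of $p$ have $\Sigma$-component $o$, so $d(x,p)\ge d_{\Sigma}(\tilde x_{\Sigma},o)$ for any lift $\tilde x$ of $x$. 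Because the cigar factor alone already produces linear volume growth, the subquadratic bound of Step 1 forces the Euclidean directions to be compact in the quotient (up to a finite cover, which does not affect the conclusion $\limsup_{r\to\infty}rS=0$), whence also $d(x,p)\le d_{\Sigma}(\tilde x_{\Sigma},o)+C$. Combining, $S(x)\le C'e^{-c\,d(x,p)}$, and therefore $\limsup_{r\to\infty}rS=0$.

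\emph{Main obstacle.} The crux is Step 1, converting the pointwise bound $rS\le1-\delta$ into a volume estimate: the decisive point is that $\limsup_{r\to\infty}rS<1$ is precisely what forces the exponent $\alpha$ in the Gronwall inequality to be strictly less than $1$ (the slack being absorbed by $|\nabla u|\to1$), which is why the threshold $1$ is sharp and dimension free. The only technical care needed is to run the divergence and coarea formulas along the a.e.-smooth level sets $\{u=t\}$; note that Step 1 itself uses only $\Ric\ge0$ and the steady-soliton identities, the sectional curvature hypothesis entering through the splitting results invoked in Step 2.
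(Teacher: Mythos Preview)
Your proof is correct and follows the same overall strategy as the paper: establish subquadratic volume growth from $\limsup_{r\to\infty}rS<1$ via an ODE on the level sets of the potential, invoke the Catino--Mastrolia--Monticelli splitting, and then read off exponential decay of $S$ on the quotient $\Sigma\times\R^{m-2}/\!\sim$.

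The only noteworthy difference is the packaging of Step~1. The paper (Proposition~\ref{volume estimate when rS small}) works with the area $A(t)$ of $\{-f=t\}$ and the first variation formula $A'(t)=\int_{\{-f=t\}}(S-\Ric(n,n))/|\nabla f|^{2}$, dropping $\Ric(n,n)\ge0$ to obtain $A'(t)\le (1+\delta)^{2}l\,t^{-1}A(t)$. You instead track $g(t)=\int_{U_t}S=\int_{\partial U_t}|\nabla u|$ via the divergence theorem and differentiate through coarea. Since $|\nabla u|\to1$, $g(t)$ and $A(t)$ are asymptotically the same quantity, and the two arguments are equivalent; your route has the mild advantage of avoiding the first variation computation. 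For the final assertion $\limsup_{r\to\infty}rS=0$, the paper appeals to its Proposition~\ref{compactness of N} (compactness of the level sets $\Sigma_t$ via Yau's linear volume lower bound, then $S\le Ce^{-\alpha r}$); your sketch (``the cigar factor alone already produces linear growth, so the Euclidean directions must be compact in the quotient'') is the same idea in abbreviated form, and would need exactly the level-set compactness argument of Lemma~\ref{compactness of level sets of f} to be made rigorous.
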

\begin{thm}\label{Ric >0, Kahler and rS small}Let $(M,g,f)$ be a complex $n$ dimensional complete non-Ricci flat gradient steady K\"{a}hler Ricci soliton with non-negative Ricci curvature and the scaling convention (\ref{scaling is 1}). In addition, we assume that
$$\displaystyle\limsup_{r \to \infty}    rS < 1.$$
Then $M$ is isometric to a quotient of $\Sigma\times \C^{n-1-k}\times N^k$ and $\displaystyle\limsup_{r \to \infty}    rS=0$, where $N$ is a simply connected compact K\"{a}hler Ricci flat manifold.
\end{thm}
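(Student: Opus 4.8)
The plan is to deduce this from Corollary~\ref{subquadratic vol growth} by first converting the pointwise hypothesis on $S$ into a volume bound, and then to improve the decay to $\limsup_{r\to\infty}rS=0$ using the explicit geometry of the cigar factor. To begin, since $S\ge0$ (Section~2) and $\limsup_{r\to\infty}rS<1$, there are $a\in(0,1)$ and $r_0>0$ with $S\le a/r$ on $M\setminus B_{r_0}$; in particular $S\to0$ at infinity, and by Hamilton's identity $S+|\na f|^2=1$ also $|\na f|\to1$. I would then invoke the volume growth estimate of Catino--Mastrolia--Monticelli \cite{CatinoMastroliaMonticelli-2016} for complete gradient steady Ricci solitons with $\Ric\ge0$ and the normalization (\ref{scaling is 1}), which bounds the polynomial order of $V(B_r)$ in terms of $\limsup_{r\to\infty}rS$; when $\limsup_{r\to\infty}rS<1$ this yields constants $\alpha<2$ and $C>0$ with $V(B_r)\le Cr^{\alpha}$ for all large $r$, hence $V(B_r)=o(r^2)$. (Alternatively, inserting $V(B_r)\le Cr^{\alpha}$ into (\ref{volume and integrability of S}) already gives $\int_{B_r}S=o(r)$, which verifies the hypothesis of Theorem~\ref{nonnegative Ric and integrable scalar curvature}.)

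With subquadratic volume growth in hand, Corollary~\ref{subquadratic vol growth} applies directly and shows that the universal cover $\widetilde M$ of $M$ is isometric to $\Sigma\times\C^{n-1-k}\times N^k$ with $N^k$ a simply connected compact K\"ahler Ricci flat manifold; in particular $M$ is isometric to a quotient $\widetilde M/\Gamma$. It remains to prove $\limsup_{r\to\infty}rS=0$. The scalar curvature of $\widetilde M$, and hence of $M$, is $S_\Sigma$ pulled back by $\pi_\Sigma$, since the Euclidean and Ricci flat factors are scalar flat; and on the cigar $S_\Sigma\le C_1 e^{-c_1 r_\Sigma}$ for some $c_1,C_1>0$, where $r_\Sigma$ denotes the distance to the tip. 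By the de Rham decomposition, each deck transformation $\gamma\in\Gamma$ preserves the factor $\Sigma$ (the unique noncompact irreducible factor) and acts on it by an isometry $\gamma_\Sigma$; since $\mathrm{Isom}(\Sigma)$ consists of rotations and reflections about the tip (the unique maximum of the strictly radially decreasing $S_\Sigma$), $\gamma_\Sigma$ preserves $r_\Sigma$. Therefore, fixing a lift $\tilde x_0$ of the base point, for $x\in M$ with a lift $\tilde x=(\sigma,v,\nu)$,
\[
r_M(x)=\min_{\gamma\in\Gamma}d_{\widetilde M}(\tilde x_0,\gamma\tilde x)\ \ge\ \min_{\gamma\in\Gamma}d_{\Sigma}(\sigma_0,\gamma_\Sigma\sigma)\ \ge\ r_\Sigma(\sigma)-r_\Sigma(\sigma_0),
\]
so $S(x)=S_\Sigma(\sigma)\le C_1 e^{-c_1(r_M(x)-r_\Sigma(\sigma_0))}$ and $r_M(x)S(x)\le C'\,r_M(x)e^{-c_1 r_M(x)}\to0$ as $r_M(x)\to\infty$. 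This would give $\limsup_{r\to\infty}rS=0$, which is the remaining assertion.

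I expect the one genuinely substantial point to be the volume estimate in the first paragraph: passing from the pointwise bound $\limsup_{r\to\infty}rS<1$ to subquadratic volume growth is exactly where the soliton identities (in the form $\Delta f=-S$, $|\na f|^2=1-S$) and a comparison argument for the volume of geodesic spheres must be used, and it is the precise point at which the threshold $1$ enters. Everything afterwards is soft: the splitting is quoted from Corollary~\ref{subquadratic vol growth} (or, equivalently, from Theorem~\ref{nonnegative Ric and integrable scalar curvature}), and the sharpening to $\limsup_{r\to\infty}rS=0$ needs only the exponential decay of the cigar's scalar curvature together with the elementary observation that deck transformations fix the tip of the $\Sigma$ factor, so that distance in $M$ controls, up to an additive constant, the cigar distance to the tip.
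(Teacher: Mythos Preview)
Your overall strategy matches the paper's: reduce to subquadratic volume growth, then quote Corollary~\ref{subquadratic vol growth} for the splitting. The paper carries out the volume estimate itself as Proposition~\ref{volume estimate when rS small} (via the first variation of area along level sets of $f$, using $\Ric\ge0$ and $S\le (l+o(1))/r$), but your appeal to the analogous estimate in \cite{CatinoMastroliaMonticelli-2016} is in the same spirit.

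The gap is in your argument for $\limsup_{r\to\infty} rS=0$. Your displayed chain of inequalities is correct but points the wrong way: from $r_M(x)\ge r_\Sigma(\sigma)-r_\Sigma(\sigma_0)$ you obtain an \emph{upper} bound $r_\Sigma(\sigma)\le r_M(x)+r_\Sigma(\sigma_0)$, whereas to deduce $S(x)=S_\Sigma(\sigma)\le C_1 e^{-c_1 r_\Sigma(\sigma)}\le C' e^{-c_1 r_M(x)}$ you need a \emph{lower} bound $r_\Sigma(\sigma)\ge c\,r_M(x)-C$. No such bound follows from the product structure of $\widetilde M$ and the action of $\Gamma$ on the $\Sigma$ factor alone: a lift $\tilde x=(\sigma,v,\nu)$ can in principle have $r_M(x)$ large because the $\Gamma$-orbit of $\tilde x_0$ is far away in the $\C^{n-1-k}\times N$ directions, with $\sigma$ sitting near the tip of $\Sigma$ (so $S(x)$ close to $1$). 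What rules this out on $M$ is precisely that the sublevel sets $\{f\ge -A\}$ (equivalently $\{r_\Sigma\le A'\}$) are \emph{compact} in $M$, and this is a genuinely nontrivial fact that consumes the integrability condition~(\ref{weakly integrable S}) you have already established. The paper isolates this as Proposition~\ref{compactness of N}; the heart of it is Lemma~\ref{compactness of level sets of f}, where compactness of $\{f=t\}$ is forced by playing Yau's linear lower volume bound for the (Ricci flat) level hypersurface against (\ref{weakly integrable S}). Once compactness is known, Proposition~\ref{properness of f} gives $-f\sim r_M$, hence $r_\Sigma\sim r_M$, and the exponential decay of $S$ follows. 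So the step you describe as ``soft'' is not: you should invoke Proposition~\ref{compactness of N} (as the paper does) or reproduce its argument, rather than the deck-transformation inequality, which by itself cannot deliver the needed direction.
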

If one allows $\limsup_{r \to \infty}   rS\leq 1$, then both Theorems \ref{sec >0 and rS small} and \ref{Ric >0, Kahler and rS small} will not be true. The counter example for the real case is the 3 dimensional Bryant soliton and for the K\"{a}hler case is the positively curved $U(2)$ invariant example constructed by Cao on $\C^2$ \cite{Cao-1996}, both satisfy $\lim_{r\to \infty} rS=1$ but they are not the quotient of $\Sigma\times\R$ or $\Sigma\times \C$. Higher dimensional counter examples can be obtained by taking product with flat torus of suitable dimensions.

The paper is organized as follows. In Section 2, we introduce the basic preliminaries needed in the subsequent sections. In Section 3, we prove Theorem \ref{nonnegative Ric and integrable scalar curvature} assuming a proposition in Section 4. In Section 4, we study the geometry of $\Sigma\times N$ ($N$ is complete Ricci flat) with quotient satisfying (\ref{weakly integrable S}) and prove a proposition needed in the previous section. Lastly, we show Theorems \ref{sec >0 and rS small} and \ref{Ric >0, Kahler and rS small} in Section 5.

{\sl Acknowledgement}: The author would like to express deep gratitude to his advisor Prof. Jiaping Wang for his constant support, guidance and encouragement. The author is also grateful to Prof. Huai-Dong Cao, Prof. Ovidiu Munteanu and Prof. Luen-Fai Tam for their helpful comments and interests in this work. The author is indebted to Fei He, Shaochuang Huang, Man-Chun Lee, Man-Shun Ma, Dekai Zhang and Bo Zhu for valuable discussions over the last several years. Part of this work was written while the author was visiting Yau Mathematical Sciences Center of Tsinghua University. He would like to thank her for the hospitality. The author was partially supported by NSF grant DMS-1606820.

\section{preliminaries and notations}

Let $(M,g)$ be a connected smooth Riemannian manifold and $f$ be a smooth function on $M$. $(M,g,f)$ is said to be a gradient steady Ricci soliton with potential function $f$ if
\be\label{eq-RS-22}
\Ric+\na^2 f=0.
\ee
A K\"{a}hler manifold $(M, g, J)$ is a gradient steady K\"{a}hler Ricci soliton if $M$ satisfies (\ref{eq-RS-22}) for some smooth function $f$ and complex structure $J$ on $M$ (see \cite{Chowetal-2007}). A steady soliton is complete if $(M,g)$ is a complete Riemannian manifold. We fix a point $p_0$ $\in M$ and denote the distance function w.r.t. $g$ from $p_0$ by $r=r(x)=d(x,p_0)$. A normalized geodesic $\gamma$ $: \R\to M$ is called a line if for all real numbers $a$ and $b$ with $a\leq b$, $\gamma\mid_{[a,b]}$ is distance minimizing.
Given any Riemannian manifold $(\widetilde{N},g_{\widetilde{N}})$, $S_{\widetilde{N}}$ refers to the scalar curvature of $\widetilde{N}$ w.r.t. $g_{\widetilde{N}}$. For simplicity, we omit the subscript $\widetilde{N}$ in $S_{\widetilde{N}}$ when $\widetilde{N}=M$ and $g_{\widetilde{N}}=g$. Let $\beta$ $\in\R$ and $h$ be any function on $M$, $h=o(r^{\beta})$ means that $\lim_{r\to \infty} r^{-\beta}h=0$. We also adopt the Einstein summation convention in this paper, i.e. any repeated index is interpreted as a sum over that index.

Ricci soliton is a self similar solution to the Ricci flow. Given a complete gradient steady Ricci soliton, let $g(t):=\varphi_t^*g$, where $t$ $\in \R$ and $\varphi_t$ is the flow of $\na f$ with  $\varphi_0= id$. Then $g(t)$ is a solution to the Ricci flow:

\be\label{eq-RF-1}
\begin{split}
\dfrac{\partial g(t)}{\partial t}&=-2\Ric(g(t))\\
g(0)&=g
\end{split}
\ee

 It was shown by Chen \cite{Chen-2009} that any complete ancient solution to the Ricci flow must have nonnegative scalar curvature. Using strong maximum principle, we see that any complete gradient steady Ricci soliton must have positive scalar curvature $S>0$ unless it is Ricci flat (see also \cite{Zhang-2009}). It is also known that any compact steady Ricci soliton is Ricci flat \cite{Chowetal-2007} and hence any non Ricci flat complete gradient steady Ricci soliton is non-compact.

Hamilton \cite{Hamilton-1995} showed that for a complete gradient steady Ricci soliton, there exists a constant $c$ such that $|\na f|^2+S=c$ on $M$ ($c\geq 0$ since $S \geq 0$). When $c>0$ (in particular if $g$ is not Ricci flat), upon scaling the metric by a constant, we have
\be\label{scaling is 1}
|\na f|^2+S=1.
\ee
We shall adopt the above scaling convention (\ref{scaling is 1}) throughout this paper. The following identities are well known for gradient Ricci steadier (see \cite{Hamilton-1995}, \cite{Chowetal-2007}, \cite{Cao-2010}):
\be \label{lap f and S}
\Delta f+S=0,
\ee
\be\label{eqn of S}
\Delta S-\langle\na f, \na S\rangle=-2|\Ric|^2
\ee
and
\be \label{grad S and Ric}
2\Ric(\na f)=\na S.
\ee

The earliest non-Einstein gradient Ricci soliton $\Sigma$ was constructed by Hamilton in \cite{Hamilton-1988}. It is called cigar soliton and is a real $2$ dimensional complete gradient steady soliton defined on $\R^2$. $\Sigma$ is rotationally symmetric with positive sectional curvature. In the standard coordinate of $\R^2$, its metric is given by (see \cite{Cao-2010})
$$g_{\Sigma}=\frac{4(dx^2+dy^2)}{1+x^2+y^2},$$
together with the function $f(x,y)=-\log(1+x^2+y^2)$ and the complex structure on $\C$, $(\Sigma, g_{\Sigma}, f)$ is a complete gradient steady K\"{a}hler Ricci soliton. It is also the unique (up to scaling) real 2 dimensional non-flat complete gradient steady Ricci soliton (see \cite{Chowetal-2007}, \cite{BernsteinMettler-2015} and ref. therein). See \cite{Cao-2010} and \cite{Chowetal-2007} for more properties of $\Sigma$ and examples of Ricci solitons.

It was shown in \cite{CaoChen-2012} and \cite{CarrilloNi-2009} (see also \cite{Deruelle-2012}) that for a complete gradient steady Ricci soliton with $\Ric>0$ and $S$ attaining maximum ( or $\Ric\geq 0$ with $\limsup_{r\to \infty}S<\max_M S$), then there exist $a\in (0,1)$ and $D>0$ such that
$$r+D\geq -f\geq a r-D \text{  on  } M.$$
We first prove a similar bound for $f$ under different conditions which suffice for the arguments in later sections. Similar estimate was also obtained independently by Deng and Zhu \cite{DengZhu-2018} without non Ricci flat condition, instead $\Ric\geq 0$ on $M$ and $-f$ being equivalent to $r$ are assumed.
\begin{prop}\label{properness of f}
Let $(M,g,f)$ be a real $m$ dimensional complete non-Ricci flat gradient steady Ricci soliton with $\Ric\geq 0$ outside some compact subset of $M$. Further suppose that $S\to 0$ as $r\to \infty$. Then for all $\a$ $\in(0,1)$, there exists $D>0$ such that
\be\label{-f equiv to dist}
r+D\geq -f\geq \a r-D \text{  on  } M,
\ee
where $r$ is the distance function from a fixed reference point $p_0$ $\in M$. In particular, $\displaystyle\lim_{r\to \infty}\frac{-f}{r}=1$.
\end{prop}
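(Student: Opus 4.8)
The plan is to prove the two inequalities in \eqref{-f equiv to dist} separately. The upper bound is immediate: the scaling convention \eqref{scaling is 1} together with $S\ge 0$ gives $|\na f|^2=1-S\le 1$, so $f$ is $1$-Lipschitz and $-f(x)\le -f(p_0)+r(x)$; any $D\ge |f(p_0)|$ works. Note also that, since $(M,g,f)$ is not Ricci flat, the strong maximum principle (Section~2) gives $S>0$ on $M$, hence $\{\na f=0\}=\{S=1\}$, and this set is compact because $S\to 0$ at infinity.

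For the lower bound, fix $\a\in(0,1)$ and pick $\e\in(0,1-\a^2]$. Using $S\to 0$ and that $\Ric\ge 0$ outside a compact set, choose $R_0$ so large that $\Ric\ge 0$ and $S\le\e$ on $\{r\ge R_0\}$; in particular $|\na f|^2=1-S\ge 1-\e>0$ there, so the unit vector field $U:=\na f/|\na f|$ is smooth on $\{r>R_0\}$. Given $x$ with $r(x)>R_0$, I would follow the integral curve $c(s)$ of $U$ with $c(0)=x$. As long as $c$ stays in $\{r>R_0\}$, identity \eqref{grad S and Ric} gives $\tfrac{d}{ds}S(c(s))=\la\na S,U\ra=\tfrac{2}{|\na f|}\Ric(\na f,\na f)\ge 0$, so $S(c(s))\ge S(x)>0$; since $\{S\ge S(x)\}$ is compact (again because $S\to 0$), the curve is confined to the fixed compact set $K:=\{S\ge S(x)\}\cap\{r\ge R_0\}$ before hitting $\partial B_{R_0}$. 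It must hit $\partial B_{R_0}$ at some finite parameter $s_1$: otherwise $c$ stays in $K$ for all $s\ge 0$, but $\tfrac{d}{ds}f(c(s))=|\na f|(c(s))\ge\sqrt{1-\e}$ forces $f(c(s))\ge f(x)+\sqrt{1-\e}\,s\to\infty$, contradicting the boundedness of $f$ on $K$ (confinement to $K$ also prevents finite-time blow-up of the ODE). Now $c|_{[0,s_1]}$ is a unit-speed curve from $x$ to $\partial B_{R_0}$, so $s_1\ge d(x,\partial B_{R_0})\ge r(x)-R_0$, and integrating $\tfrac{d}{ds}(-f)(c(s))=-|\na f|(c(s))=-\sqrt{1-S(c(s))}$, together with $S\le\e$ along $c$, gives
\[
-f(x)=-f(c(s_1))+\int_0^{s_1}\!\sqrt{1-S(c(s))}\,ds\ \ge\ -\max_{\overline{B_{R_0}}}|f|+\sqrt{1-\e}\,\bigl(r(x)-R_0\bigr)\ \ge\ \a\,r(x)-D
\]
for a suitable $D$ when $r(x)>R_0$; enlarging $D$ to also dominate $\max_{\overline{B_{R_0}}}|f|+\a R_0$ handles $r(x)\le R_0$. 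Combining with the upper bound proves \eqref{-f equiv to dist}, and letting $\a\uparrow 1$ yields $\lim_{r\to\infty}(-f)/r=1$.

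The step I expect to be the main obstacle is the dynamical claim that the integral curve of $U$ cannot remain outside $B_{R_0}$ forever. The two features that make it work are: $\Ric\ge 0$ near infinity, which via \eqref{grad S and Ric} makes $S$ monotone along $U$ and thereby traps the curve in a compact set; and the \emph{uniform} lower bound $|\na f|\ge\sqrt{1-\e}>0$ outside $B_{R_0}$, which makes $f$ grow at a definite linear rate along $U$ and so contradicts its boundedness on that compact set. Everything else is a direct consequence of the steady soliton identities \eqref{scaling is 1} and \eqref{grad S and Ric}.
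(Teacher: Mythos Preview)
Your proof is correct and follows essentially the same route as the paper's: both use that $S$ is nondecreasing along the forward flow of $\na f$ (via \eqref{grad S and Ric} and $\Ric\ge 0$ near infinity) together with $S\to 0$ to force the integral curve to return to a fixed compact set in finite time, and then read off the linear relation between $-f$ and arclength. The only cosmetic difference is your choice of parameterization: you flow by the unit field $\na f/|\na f|$ (so arclength equals the parameter and $f$ grows at rate $\ge\sqrt{1-\e}$), whereas the paper flows by $\na f/|\na f|^2$ (so $f$ grows at unit rate and arclength is bounded by $(1+\delta)t$); these are equivalent reparametrizations of the same argument.
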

\begin{proof}The upper bound of $-f$ follows from (\ref{scaling is 1}) and $|\na f|\leq 1$. For the lower bound, let $\delta$ be a small positive constant to be chosen. Since $|\na f|^2+S\equiv 1$ and $S\to 0$ at infinity, there is a compact subset $K$ of $M$ such that $p_0$ $\in K$ and on $M\setminus K$, $\Ric\geq 0$ and
\be\label{lower bdd of na f2}
|\na f|\geq \frac{1}{1+\delta}.
\ee
Let $\psi_t$ be the flow of $\frac{\na f}{|\na f|^2}$ with $\psi_0$ be the identity map. Let $q$ $\in M\setminus K$ and for small $t\geq 0$, by (\ref{lower bdd of na f2}),
\be\label{distance v.s. t}
d(\psi_t(q),q)\leq \int_0^t\frac{1}{|\na f|(\psi_s(q))}ds\leq (1+\delta)t.
\ee
By short time existence of O.D.E., $\psi_t(q)$ exists as long as it is in $M\setminus K$. Therefore we can define $T$ as follows
\be
T:=\sup\{a: \psi_t(q)\in M\setminus K \text{  for all  } t \in [0,a]\}.
\ee
Obviously, $T=T(q)>0$ by compactness of $K$. For $0\leq t<T$
\be\label{value of f along integral curve}
f(\psi_t(q))-f(q)=\int_0^t\la\na f,\dot{\psi}_s(q)\ra ds=\int_0^t 1 ds=t.
\ee
We first show that $T<\infty$. Suppose not, then $T=\infty$ and by (\ref{value of f along integral curve}), there is a sequence of $t_k\to \infty$ such that $\psi_{t_k}(q)\to \infty$ as $k\to \infty$. But by (\ref{grad S and Ric})
\begin{eqnarray*}
S(\psi_{t_k}(q))-S(q)&=&\int_0^{t_k}\la \na S,\dot{\psi}_s(q)\ra ds\\
&=&\int_0^{t_k}\la \na S,\frac{\na f(\psi_s(q))}{|\na f|^2}\ra ds\\
&=&\int_0^{t_k} \frac{2\Ric(\na f,\na f)}{|\na f|^2}ds\\
&\geq&0.
\end{eqnarray*}
Hence $S(\psi_{t_k}(q))\geq S(q)>0$ and $\lim_{k\to \infty}S(\psi_{t_k}(q))\neq 0$, contradicting to our assumption that $S=o(1)$. We proved that $T<\infty$ and $\psi_{T}(q)\in K$. By (\ref{distance v.s. t}), $d(\psi_T(q),q)\leq (1+\delta)T$.
$$r(q)=d(p_0,q)\leq d(\psi_T(q),q)+d(\psi_T(q),p_0)\leq (1+\delta)T+ \text{diam}K,$$
where $\text{diam}K$ is the diameter of the subset $K$. We have
\begin{eqnarray*}
-f(q)&=&T-f(\psi_T(q))\\
&\geq& T-\sup_K |f|\\
&\geq&\frac{1}{1+\delta}r(q)-\frac{\text{diam}K}{1+\delta}-\sup_K |f|.
\end{eqnarray*}
(\ref{-f equiv to dist}) follows by choosing $\delta>0$ small enough such that $\frac{1}{1+\delta}\geq \a$. $-r^{-1}f\to 1$ as $r\to \infty$ is now a consequence of (\ref{-f equiv to dist}).
\end{proof}

\section{Proof of theorem \ref{nonnegative Ric and integrable scalar curvature}}
To start with, we recall a result on the kernel of the Ricci tensor of steady soliton satisfying (\ref{weakly integrable S}). It was proved in \cite{MunteanuSungWang-2017} in the real case with non-negative sectional curvature. However, the argument also works well in the K\"{a}hler case with non-negative Ricci curvature. For the sake of completeness, we include the proof of the result here.
\begin{prop}\label{ker of Ric}\cite{MunteanuSungWang-2017}
Let $(M,g,f)$ be a complex n dimensional complete non Ricci flat gradient steady K\"{a}hler Ricci soliton with $\Ric\geq 0$. Suppose that
$$\liminf_{r\to \infty} \frac{1}{r}\int_{B_r}S=0.$$  Then $S^2\equiv 2|\Ric|^2$ and the null space $E$ of the Ricci tensor is a subbundle of the tangent bundle $TM$ with real rank $2n-2$.
\end{prop}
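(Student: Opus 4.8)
The plan is to establish the pointwise identity $S^2\equiv 2|\Ric|^2$ first, by an integration by parts that converts the weak integrability of $S$ into the vanishing of a globally nonnegative integral, and then to read off the subbundle statement from the constancy of $\mathrm{rank}\,\Ric$.

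I would begin with the algebraic input provided by the K\"ahler condition. Since the Ricci endomorphism commutes with $J$, its real eigenvalues occur in equal pairs $\nu_1,\nu_1,\dots,\nu_n,\nu_n$, and $\Ric\ge 0$ gives $\nu_i\ge 0$ with $S=2\sum_i\nu_i$ and $|\Ric|^2=2\sum_i\nu_i^2$ (squared Frobenius norm). Hence $S^2-2|\Ric|^2=4\big((\sum_i\nu_i)^2-\sum_i\nu_i^2\big)\ge 0$, with equality at a point exactly when at most one $\nu_i$ is nonzero there, i.e.\ when $\Ric$ has real rank $\le 2$ at that point. Because the soliton is not Ricci flat, $S>0$ on all of $M$ (Section 2), so equality at a point is the same as $\Ric$ having real rank exactly $2$ there.

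Next I would show $\int_M(S^2-2|\Ric|^2)=0$. Using $\Delta f=-S$ and $\Delta S-\la\na f,\na S\ra=-2|\Ric|^2$, multiply the latter by a cutoff $\varphi_r$ with $\varphi_r\equiv 1$ on $B_r$, $\mathrm{supp}\,\varphi_r\subset B_{2r}$, $|\na\varphi_r|\le C/r$, integrate over $M$, and integrate by parts (handling the drift term through $\Delta f=-S$) to obtain
$$\int_M\varphi_r\,(S^2-2|\Ric|^2)=\int_M S\,\la\na\varphi_r,\na f\ra-\int_M\la\na\varphi_r,\na S\ra .$$
By the scaling convention $|\na f|\le 1$, while (\ref{grad S and Ric}) and the eigenvalue bound of the previous paragraph give $|\na S|=|2\Ric(\na f)|\le 2|\Ric|\le\sqrt 2\,S$; hence the right-hand side is at most $(C'/r)\int_{B_{2r}}S$. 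Since $\liminf_{r\to\infty}r^{-1}\int_{B_r}S=0$, there is a sequence $\rho_k\to\infty$ with $\rho_k^{-1}\int_{B_{\rho_k}}S\to 0$; taking $r=\rho_k/2$ makes the right-hand side tend to $0$, so $0\le\int_{B_{\rho_k/2}}(S^2-2|\Ric|^2)\le\int_M\varphi_{\rho_k/2}(S^2-2|\Ric|^2)\to 0$. As the integrand is nonnegative and these balls exhaust $M$, this forces $S^2\equiv 2|\Ric|^2$, and then, by the previous paragraph, $\Ric$ has real rank $2$ at every point of $M$.

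Finally, a smooth field of symmetric endomorphisms of locally constant rank has smooth image and kernel subbundles: near any point $\mathrm{im}\,\Ric$ is spanned by $\Ric(v_1),\Ric(v_2)$ for two vectors generic at that point, and $\ker\Ric=(\mathrm{im}\,\Ric)^\perp$ by symmetry of $\Ric$. Hence $E:=\ker\Ric$ is a smooth subbundle of $TM$ of real rank $2n-2$ (and $J$-invariant, since $\Ric$ commutes with $J$). The step I expect to be the crux is the global one in the third paragraph: squeezing the vanishing of $\int_M(S^2-2|\Ric|^2)$ out of the one-sided condition $\liminf r^{-1}\int_{B_r}S=0$. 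This is exactly where the K\"ahler inequality $2|\Ric|^2\le S^2$ earns its keep — it supplies the correct nonnegative integrand, something unavailable for free in the real $\Ric\ge0$ setting — and one must arrange the cutoff error in precisely the form $r^{-1}\int_{B_{2r}}S$ so that the hypothesis annihilates it along a subsequence.
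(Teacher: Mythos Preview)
Your proposal is correct and follows essentially the same strategy as the paper: establish the pointwise inequality $2|\Ric|^2\le S^2$ from the $J$-invariance of $\Ric$, then use a cutoff integration by parts together with the condition $\liminf r^{-1}\int_{B_r}S=0$ to force equality, and read off the constant-rank conclusion. The only cosmetic difference is in how the integration by parts is organized: the paper rewrites $S^2-2|\Ric|^2=-S\Delta f+2R_{ij}f_{ij}$ directly from the soliton equation and integrates by parts using the contracted Bianchi identity, whereas you start from the scalar-curvature equation $\Delta S-\la\na f,\na S\ra=-2|\Ric|^2$ and handle the drift term via $\la\na f,\na S\ra=\div(S\na f)+S^2$; both routes produce the same boundary terms controlled by $r^{-1}\int_{B_{2r}}S$.
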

\begin{proof}The argument is essentially due to \cite{MunteanuSungWang-2017}. Let $\lambda_i, i=1,2,\ldots, 2n$ be the eigenvalues of the Ricci tensor. By $J$ invariance of $\Ric$, we may assume $\lambda_i=\lambda_{n+i} , i=1,2,\ldots, n$ and $0\leq\lambda_1\leq \lambda_2\leq\ldots\leq \lambda_n$. Hence

\begin{eqnarray*}
S-2\lambda_i&=&S-2\lambda_{n+i}\\
&=&\big(\displaystyle\sum_{j\neq i}^{2n}\lambda_j\big)-\lambda_i\\
&=&\big(\displaystyle\sum_{j\neq i}^{2n}\lambda_j\big)-\lambda_{n+i}\\
&=&\displaystyle\sum_{j\neq i, n+i}^{2n}\lambda_j\geq 0.
\end{eqnarray*}

From this, we know that
$$2|\Ric|^2=\sum_{j=1}^{2n}2\lambda_j^2\leq \sum_{j=1}^{2n}\lambda_jS=S^2,$$
with equality holds at a point $p$ iff $\lambda_n=\lambda_{2n}=\frac{S}{2}$ at $p$ iff the dimension of the null space of $\Ric$ at $p$ is $2n-2$. We are going to show $2|\Ric|^2\equiv S^2$ on $M$.
Let $\varphi$ be a non-negative cut off function which $\equiv 1$ on $B_R(p_0)$, $\equiv 0$ outside $B_{2R}(p_0)$ and $|\nabla \varphi|\leq \frac{c}{R}$. We know that by the contracted second Bianchi identity $2\div(\Ric)=\na S$,
\begin{eqnarray*}
0 &\leq& \displaystyle\int_M \varphi^2(S^2-2|\Ric|^2)\\
&=& \displaystyle\int_M \varphi^2(-S\Delta f+2R_{ij}f_{ij})\\
&=& \displaystyle\int_M \varphi^2(\langle\nabla S,\nabla f\rangle-2R_{ij,j}f_i)\\
& & +\displaystyle\int_M 2\varphi S \langle\nabla \varphi,\nabla f\rangle-\displaystyle\int_M 4\varphi R_{ij}f_i\varphi_j\\
&\leq& \dfrac{c}{R}\displaystyle\int_{B_{2R}(p_0)} S.
\end{eqnarray*}
By condition (\ref{weakly integrable S}), one can pick a sequence of $R_k \to \infty$ such that R.H.S. goes to zero as $k \to \infty$, we show that $S^2=2|\Ric|^2$ everywhere. It is not difficult to see from the previous argument that $\Ric$ only has two distinct eigenvalues, one is $0$ with multiplicity $2n-2$, another one is $\frac{S}{2}$ with multiplicity $2$, result follows.\\
\end{proof}
Since we do not impose any condition on the sign of bisectional curvature, the non-triviality of the kernel of the Ricci tensor doesn't suffice for the splitting. Motivated by the local splitting result in \cite{Deruelle-2012} (see Theorem \ref{local splitting}), we show that $\na f$ is always an eigenvector of $\Ric$ which eventually leads to the splitting of $M$.
\begin{prop}\label{grad f is eigenvector}Let $(M,g,f)$ be a complex n dimensional complete gradient steady K\"{a}hler Ricci soliton with $\Ric\geq 0$. Suppose that
$$\liminf_{r\to \infty} \frac{1}{r}\int_{B_r}S=0 ,$$
then $|\na f|^2S=2\Ric(\na f, \na f)$ on $M$. In particular if $M$ is not Ricci flat, then it is isometric to a quotient of $\Sigma \times N$, where $\Sigma$ and $N$ denote the cigar soliton and some simply connected complete K\"{a}hler Ricci flat manifold respectively.
\end{prop}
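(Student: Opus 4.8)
The plan is to reduce the whole statement to showing that the nonnegative function
\[
u \;:=\; |\na f|^2 S - 2\Ric(\na f,\na f)\;\ge\;0
\]
vanishes identically, and then to invoke Deruelle's observation that an eigenvector structure for $\na f$ forces the splitting. If $M$ is Ricci flat the identity $|\na f|^2S=2\Ric(\na f,\na f)$ is trivial ($S\equiv 0$), so I would assume $M$ is not Ricci flat; then $S>0$ on $M$ and Proposition \ref{ker of Ric} applies: $2|\Ric|^2\equiv S^2$, and $TM=E\oplus E^{\perp}$, where $\Ric\equiv 0$ on the real $(2n-2)$--dimensional null bundle $E$ and $\Ric\equiv\tfrac{S}{2}g$ on the $J$--invariant real rank--two eigenbundle $E^{\perp}$. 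Since the top eigenvalue of $\Ric$ is $S/2$ we get $2\Ric(\na f,\na f)\le S|\na f|^2$, i.e. $u\ge 0$.

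Next I would prove $u\equiv 0$ by an integration by parts in the spirit of \cite{CatinoMastroliaMonticelli-2016} and \cite{MunteanuSungWang-2017}. Put $Y:=S\,\na f-\na S=S\,\na f-2\Ric(\na f)$, using (\ref{grad S and Ric}). Two identities drive the argument. First, using (\ref{lap f and S}), (\ref{eqn of S}) and (\ref{grad S and Ric}),
\[
\div Y \;=\; \la\na f,\na S\ra + S\,\Delta f - \Delta S \;=\; 2|\Ric|^2 - S^2,
\]
which is $\equiv 0$ by Proposition \ref{ker of Ric}; second, $\la Y,\na f\ra = S|\na f|^2 - 2\Ric(\na f,\na f) = u$. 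Fix a smooth bounded increasing $h:\R\to\R$ (say $h=\arctan$), so $h'>0$; since $\div Y=0$, $\div\big(h(f)Y\big)=h'(f)\la\na f,Y\ra+h(f)\div Y=h'(f)\,u$. Let $\varphi_R$ be the standard cutoff ($\varphi_R\equiv 1$ on $B_R$, $\varphi_R\equiv 0$ outside $B_{2R}$, $|\na\varphi_R|\le c/R$); since $|\na f|\le 1$ by (\ref{scaling is 1}) and $|\Ric|\le S$ one has $|Y|\le 3S$. Integrating $h'(f)u=\div(h(f)Y)$ against $\varphi_R^2$, integrating by parts, and using that $h$ is bounded,
\[
0\;\le\;\int_M \varphi_R^2\, h'(f)\, u \;=\; -2\int_M \varphi_R\, h(f)\, \la\na\varphi_R, Y\ra \;\le\; \frac{C}{R}\int_{B_{2R}} S .
\]
By (\ref{weakly integrable S}) there is $r_k\to\infty$ with $r_k^{-1}\int_{B_{r_k}}S\to 0$; taking $R=r_k/2$ sends the right side to $0$. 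Since $\varphi_R^2$ increases to $1$ and the integrand is nonnegative, monotone convergence forces $\int_M h'(f)\,u=0$, hence $u\equiv 0$; that is, $|\na f|^2 S=2\Ric(\na f,\na f)$ on $M$.

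For the splitting: where $\na f\ne 0$, $u=0$ says that the Rayleigh quotient of $\Ric$ at $\na f$ equals its maximum $S/2$, so $\Ric(\na f)=\tfrac{S}{2}\na f$, and by $J$--invariance $\Ric(J\na f)=\tfrac{S}{2}J\na f$ as well, whence $E^{\perp}=\mathrm{span}\{\na f,J\na f\}$ on $\{\na f\ne 0\}$. Thus $\na f$ is everywhere an eigenvector of $\Ric=-\Hess f$, and I would then run Deruelle's argument \cite{Deruelle-2012}: since $\na f$ is real holomorphic, $J\na f$ is a Killing field with $[\na f,J\na f]=0$, so $E^{\perp}$ is integrable with totally geodesic $2$--dimensional leaves which, with the induced metric, carry gradient steady K\"ahler Ricci soliton structures with $S>0$ — hence copies of the cigar $\Sigma$ by uniqueness in real dimension two — while the complementary parallel distribution $E=\ker\Ric$ integrates to Ricci flat K\"ahler leaves. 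The local product structure extends across the critical set $\{\na f=0\}=\{S=1\}$ by real analyticity of the soliton metric, and completeness then gives that the universal cover of $M$ is isometric to $\Sigma\times N$ with $N$ simply connected, complete, K\"ahler and Ricci flat; hence $M$ is a quotient of $\Sigma\times N$.

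The heart of the matter is the first step, and in particular the choice of $Y$: the identity $\div Y=2|\Ric|^2-S^2$ shows that the kernel--of--Ricci structure of Proposition \ref{ker of Ric} — equivalently, the weak integrability hypothesis (\ref{weakly integrable S}) — is precisely what makes $Y$ divergence free, which is what lets the weighted quantity become a pure divergence. Equally essential is that the weight $h(f)$ be \emph{bounded}: the naive primitive $\div(fY)=u$ leaves a boundary term of order $\int_{B_{2R}}S$ with no decay, so the $\liminf$ condition by itself would not close the estimate. A secondary technical point, only indicated above, is propagating the local splitting over the critical set of $f$, where one invokes analyticity (or Theorem \ref{local splitting} away from that set together with a limiting argument).
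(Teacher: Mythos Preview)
Your proof is correct, and the integration-by-parts step is organized more cleanly than in the paper. The paper multiplies $u$ by the weight $Q^{-1}=(f^2+1)^{-1/2}$ and performs two separate integrations by parts on $\int\phi^2Q^{-1}|\na f|^2S$ and $\int\phi^2Q^{-1}\cdot 2\Ric(\na f,\na f)$, eventually arriving (after using $2|\Ric|^2=S^2$) at $\int\phi^2Q^{-3}u=\text{boundary}$ with boundary terms controlled because $|f|Q^{-1}\le 1$. Your route packages the same content into the single vector field $Y=S\na f-\na S$: the identity $\div Y=2|\Ric|^2-S^2$ makes it transparent that Proposition~\ref{ker of Ric} is exactly what renders $Y$ divergence free, and then $h'(f)\,u=\div(h(f)Y)$ is a pure divergence for any $h$. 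Your emphasis on the boundedness of $h$ is the right point and is the direct analogue of the paper's $|f|Q^{-1}\le 1$; in fact with $h=\arctan$ your weight $h'(f)=(1+f^2)^{-1}$ is essentially the paper's $Q^{-2}$. So the two arguments are the same idea, but yours isolates the mechanism more clearly and avoids the long term-by-term computation.

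For the splitting you invoke Deruelle's argument and assert that $E=\ker\Ric$ is parallel; the paper supplies this directly: for a local section $X$ of $E$ and any $Y$, $\la\na_Y X,\na f\ra=Y\la X,\na f\ra-\la X,\na_Y\na f\ra=0+\Ric(X,Y)=0$ (and likewise with $J\na f$), so $\na_Y X\in E$ wherever $\na f\neq 0$, with the critical set handled by analyticity. That two-line computation is worth including rather than citing, since Theorem~\ref{local splitting} as stated carries extra hypotheses ($S\in L^1$, $|Rm|\to 0$) you have not assumed; once $E$ is parallel, de Rham splitting plus the $2$-dimensional classification finishes the job exactly as you outline.
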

\begin{proof} We are done if $g$ is Ricci flat, so we can assume $\Ric$ is not identically zero. Since $\Ric\geq 0$ and the curvature tensor is $J$ invariant, we have $|\na f|^2S\geq 2\Ric(\na f, \na f)$. Let $Q:=\sqrt{f^2+1}\geq 1$. Then $\na Q = Q^{-1} f \na f$. Let $\phi$ $\in$ $C^{\infty}_c(M)$ be any smooth compactly supported function on $M$.
\begin{eqnarray*}
0 &\leq& \int_{M}\phi^2Q^{-1}(|\na f|^2S- 2\Ric(\na f, \na f))\\
&=&\int_{M}\phi^2Q^{-1} f_i f_i S-\int_{M}2\phi^2Q^{-1}R_{ij}f_i f_j\\
&=:& (I)+(II)
\end{eqnarray*}
Using integration by part, we have
\begin{eqnarray*}
(I)&=&-\int_M 2\phi\phi_iQ^{-1}f f_iS + \int_M \phi^2Q^{-3} f f_i f f_i S\\
&&-\int_M \phi^2Q^{-1}f f_{ii}S-\int_M\phi^2Q^{-1}f f_iS_i\\
&=&-\int_M 2\phi\phi_iQ^{-1}f f_iS + \int_M \phi^2Q^{-3}f^2f_i f_i S\\
&&+\int_M \phi^2Q^{-1}f S^2-\int_M\phi^2Q^{-1}f f_iS_i,
\end{eqnarray*}
where we use the fact that (\ref{lap f and S}) $\Delta f+S=0$. Similarly, using (\ref{grad S and Ric}) $\na S=2 \Ric(\na f)$ and the contracted second Bianchi identity $2\div \Ric=\na S$, we see that
\begin{eqnarray*}
(II)&=& \int_M 4\phi\phi_i Q^{-1}f f_jR_{ij}-\int_M2\phi^2Q^{-3}f f_i f f_jR_{ij}\\
& &+\int_M2\phi^2Q^{-1}f R_{ij,i}f_j+\int_M2\phi^2Q^{-1}f R_{ij}f_{ji}\\
&=&\int_M 2\phi\phi_i Q^{-1}f S_i-\int_M 2\phi^2Q^{-3}f^2 f_if_jR_{ij}\\
& &+\int_M\phi^2Q^{-1}f f_jS_j-\int_M2\phi^2Q^{-1}f |\Ric|^2\\
&=&\int_M 2\phi\phi_i Q^{-1}f S_i-\int_M 2\phi^2Q^{-3}f^2 f_if_jR_{ij}\\
& &+\int_M\phi^2Q^{-1}f f_jS_j-\int_M\phi^2Q^{-1}f S^2,\\
\end{eqnarray*}
we also use the identity $S^2=2|\Ric|^2$  (see \cite{MunteanuSungWang-2017} and Proposition \ref{ker of Ric}). Hence, we have
\begin{eqnarray*}
&& \int_{M}\phi^2Q^{-1}(|\na f|^2S- 2\Ric(\na f, \na f))\\
&=&-\int_M 2\phi\phi_iQ^{-1}f f_iS+\int_M 2\phi\phi_i Q^{-1}f S_i\\
&&+ \int_M \phi^2Q^{-3}f^2(f_if_i S-2f_if_jR_{ij})\\
&=&-\int_M 2\phi\phi_iQ^{-1}f f_iS+\int_M 2\phi\phi_i Q^{-1}f S_i\\
&&+ \int_{M} \phi^2Q^{-3}f^2(|\na f|^2 S-2\Ric(\na f, \na f)).
\end{eqnarray*}
Since $Q^{-1}-Q^{-3}f^2=Q^{-3}$, we know that
\begin{eqnarray*}
&&\int_{M}\phi^2Q^{-3}(|\na f|^2S- 2\Ric(\na f, \na f))\\
&=&-\int_M 2\phi\phi_iQ^{-1}f f_iS+\int_M 2\phi\phi_i Q^{-1}f S_i.
\end{eqnarray*}
Now we take $0\leq \phi\leq 1$ be a cut off function $\equiv 1 $ on $B_R$, vanishes outside $B_{2R}$ and $|\na \phi|\leq \frac{c}{R}$.
\begin{eqnarray*}
|\int_M 2\phi\phi_iQ^{-1}f f_iS|&\leq& \int_{B_{2R}\setminus B_R} \frac{2c}{R}Q^{-1}|f|S\\
&\leq& \frac{2c}{R}\int_{B_{2R}\setminus B_R} S.
\end{eqnarray*}
Since $\Ric\geq 0$, $|\na S|\leq 2|\Ric|\leq cS$.

\begin{eqnarray*}
|\int_M 2\phi\phi_i Q^{-1}f S_i|&\leq&\int_{B_{2R}\setminus B_R}\frac{2c}{R}Q^{-1}|f| |\na S|\\
&\leq& \frac{c_1}{R}\int_{B_{2R}\setminus B_R} S.
\end{eqnarray*}
All in all, there is a positive constant $c_2$ independent of $R$ such that
\begin{eqnarray*}
0&\leq& \int_{M\cap B_R}Q^{-3}(|\na f|^2S- 2\Ric(\na f, \na f))\\
&\leq& \frac{c_2}{R}\int_{B_{2R}\setminus B_R} S.
\end{eqnarray*}
Using the condition $\liminf_{r\to \infty} \frac{1}{r}\int_{B_r}S=0$, we may pick a sequence of $R_k \to \infty$ such that R.H.S. goes to zero as $k\to \infty$, we conclude that $|\na f|^2S=2\Ric(\na f, \na f)$ on $M$.
We now proceed to prove the splitting of $M$. By $\Ric\geq 0$ and $J$ invariance of $\Ric$, we have for any tangent vector $v$ with $|v|_g=1$,
$$2\Ric(v,v)=\Ric(v,v)+\Ric(Jv,Jv)\leq S.$$
Hence whenever $\na f\neq 0$, $2\Ric(\frac{\na f}{|\na f|},\frac{\na f}{|\na f|})=S$, we deduce that $\na f$ is an eigenvector with eigenvalue equal to $\frac{S}{2}$ and thus $\na f$ is always perpendicular to the nullspace of $\Ric$. Let $E$ be the nullspace of $\Ric$, it is a smooth subbundle of the tangent bundle of real rank $2n-2$ (\cite{MunteanuSungWang-2017} and Proposition \ref{ker of Ric}). Suppose at $p$, $\na f\neq 0$, the tangent space at $p$ decomposes orthogonally as $T_pM$ $=$ $E_p$ $\oplus_{\perp}$ $span\{\na f, J\na f\}$. Let $X$ be a smooth section of $E$ defined locally near $p$ and $Y$ be any smooth vector field defined near $p$, then $JX$ is also a smooth section of $E$. At $p$
\begin{eqnarray*}
\la\na_Y X,\na f\ra &=& Y\la X, \na f\ra- \la X, \na_Y\na f\ra\\
&=& \Ric(X, Y)\\
&=& 0.
\end{eqnarray*}
Similarly, $\na_Y JX \perp \na f$, thus $\na_Y X(p)$ is in $E_p$. If $\na f=0$ at $p$, by real analyticity of $g$ (see \cite{Kotschwar-2013} and ref. therein), $\{\na f=0\}=\{S=1\}$ has no interior point in $M$ (indeed if $p$ is an interior point, then by (\ref{eqn of S}) $0=\Delta S(p)-\langle\na f, \na S\rangle(p)=-2|\Ric|^2(p)$,  which is absurd). We may find a sequence $p_k \to p$ with $\na f(p_k)\neq 0$,
$$\Ric(\na_Y X)(p)=\lim_{k\to \infty}\Ric(\na_Y X)(p_k)=0.$$
From this, we conclude that $E$ is invariant under parallel translation. By de Rham splitting theorem (see \cite{KobayashiNomizu-1969}) and the classification of real 2 dimensional complete gradient steady Ricci solitons (see \cite{Chowetal-2007}, \cite{BernsteinMettler-2015} and ref. therein), the universal cover of $M$ splits like $\Sigma\times N$ for some K\"{a}hler Ricci flat $N$.
\end{proof}
\begin{proof}[Proof of Theorem \ref{nonnegative Ric and integrable scalar curvature}]
By Proposition \ref{grad f is eigenvector}, the universal covering space of $M$ splits isometrically as $\Sigma\times N$ for some simply connected complete K\"{a}hler Ricci flat $N$. By de Rham decomposition theorem (see \cite{KobayashiNomizu-1969}), $N$ is isometric to $\C^{n-1-k}\times N_1$ with $N_1$ being a product of irreducible K\"{a}hler Ricci flat manifolds. It remains to show $N_1$ is compact. By Proposition \ref{compactness of N} (will be proved in the coming section), $\R^{2n-2-2k}\times N_1\cong_{\text{ isom }} \R^l\times Q$, for some simply connected compact Ricci flat manifold $Q$. Since both $N_1$ and $Q$ have no line, we conclude that $2n-2-2k=l$ and $N_1$ is diffeomorphic to the compact $Q$.
\end{proof}

\section{Geometry of $\Sigma\times N/\sim$}
In this section, we study the geometry of the quotient manifold $M$ $= \Sigma\times N/\sim$ with scalar curvature $S$ satisfying (\ref{weakly integrable S}), where $\Sigma$ and $N$ denote the cigar soliton and some real $m-2$ dimensional simply connected complete (not necessarily K\"{a}hler) Ricci flat manifold respectively. The main goal of this section is to prove the following:

\begin{prop}\label{compactness of N}
Let $M^m$ $= \Sigma\times N/\sim$, for some simply connected complete Ricci flat manifold $N$.
Suppose that on $M$
$$\liminf_{r\to \infty} \frac{1}{r}\int_{B_r}S=0 ,$$
then there exist positive constants $C$ and $\a$ $\in (0,1)$ such that
$$C^{-1}e^{-\frac{r}{\a}}\leq S\leq Ce^{-\a r} \text{  on  } M.$$
Moreover, $N$ is isometric to $\R^{m-2-k}\times Q^k$, where $Q$ is some simply connected compact Ricci flat manifold.
\end{prop}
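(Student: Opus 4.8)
The plan is to use the fibre‑bundle structure of $M$ over the \emph{compact} quotient $N/\overline\Gamma$, together with the explicit exponential decay of the cigar's scalar curvature. Write $M=(\Sigma\times N)/\Gamma$ with $\Gamma=\pi_1(M)$ acting freely and properly discontinuously by isometries. Since $\Sigma$ is positively curved while every de Rham factor of the Ricci‑flat $N$ is Ricci flat (and a non‑flat $2$–manifold is never Ricci flat), $\Sigma$ is the unique non‑flat $2$–dimensional factor in the de Rham decomposition of $\Sigma\times N$; hence the de Rham theorem forces every isometry, in particular every $\gamma\in\Gamma$, to split as $\gamma=(A_\gamma,B_\gamma)$ with $A_\gamma\in\mathrm{Isom}(\Sigma)=O(2)$ and $B_\gamma\in\mathrm{Isom}(N)$. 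Because every element of $O(2)$ fixes the tip $o\in\Sigma$, freeness of the $\Gamma$–action forces $\gamma\mapsto B_\gamma$ to be injective with image $\overline\Gamma\le\mathrm{Isom}(N)$ acting freely, and proper discontinuity on $\Sigma\times N$ descends to proper discontinuity of $\overline\Gamma$ on $N$. Thus $N/\overline\Gamma$ is a complete Ricci‑flat manifold and $q\colon M\to N/\overline\Gamma$, $[(x,y)]\mapsto[y]$, is a Riemannian submersion with totally geodesic fibres isometric to $(\Sigma,g_\Sigma)$, locally a metric product over the base.

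Next I would run a volume comparison. On $\Sigma\times N$ the scalar curvature is carried entirely by the cigar factor, so $S$ descends to a function of the axial distance $\rho=\rho_\Sigma$ alone with $c_0^{-1}e^{-\rho}\le S\le c_0 e^{-\rho}$ (using the scaling convention \eqref{scaling is 1}), and the cigar has finite total curvature $\int_\Sigma S_\Sigma\,dV_\Sigma<\infty$ since its level circles have uniformly bounded length and $S_\Sigma$ decays exponentially. Fixing $\overline y_0\in N/\overline\Gamma$ and $p_0\in M$ over the tip, a Fubini computation along $q$ using the local product structure yields, for all large $r$,
\[
c_1\,\mathrm{Vol}\!\left(B_{r-1}(\overline y_0)\right)\ \le\ \int_{B_r}S\ \le\ c_2\,\mathrm{Vol}\!\left(B_{r}(\overline y_0)\right),
\]
the lower bound coming from the slab $\{\rho<1\}$ (on which $S$ is bounded below) lying inside $B_r$, and the upper bound from the inclusion $B_r\subseteq q^{-1}(B_r(\overline y_0))$ together with finiteness of the total curvature of $\Sigma$. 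Hence hypothesis \eqref{weakly integrable S} is equivalent to $\liminf_{r\to\infty}r^{-1}\mathrm{Vol}(B_r(N/\overline\Gamma))=0$.

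To conclude: a complete non‑compact manifold with $\Ric\ge0$ has at least linear volume growth (Calabi--Yau), so the displayed liminf being $0$ forces $N/\overline\Gamma$ to be compact; being compact and Ricci flat, its universal cover $N$ splits isometrically as $\R^{m-2-k}\times Q^k$ with $Q$ compact, simply connected and Ricci flat, by the Cheeger--Gromoll splitting theorem. Finally, compactness of $N/\overline\Gamma$, of diameter $D_0$, together with the product distance formula $d_M([(x,y)],p_0)=\inf_{\gamma}\sqrt{\rho(x)^2+d_N(y,B_\gamma y_0)^2}$ gives $\rho\le d(\cdot,p_0)\le\rho+D_0$, i.e. $r-D_0\le\rho\le r$; combined with $c_0^{-1}e^{-\rho}\le S\le c_0 e^{-\rho}$ this yields $C^{-1}e^{-r}\le S\le C e^{-r}$ with $C=c_0e^{D_0}$, hence the asserted $C^{-1}e^{-r/\alpha}\le S\le Ce^{-\alpha r}$ for every $\alpha\in(0,1)$.

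I expect the genuine work to be the first step — checking that $\Gamma$ preserves the $\Sigma\times N$ splitting and extracting from it a free, properly discontinuous isometric action of $\overline\Gamma$ on $N$ that makes $N/\overline\Gamma$ a manifold onto which $M$ Riemann‑submerses — since once that picture is in place the volume estimate is a direct Fubini computation and the last step is just the Calabi--Yau lower bound plus Cheeger--Gromoll.
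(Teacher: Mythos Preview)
Your argument is correct but follows a genuinely different route from the paper. The paper works intrinsically on $M$ through the soliton potential: it first shows that $\tilde f$ descends to a potential $f$ on $M$ (Lemma~\ref{existence of potential f}), then proves by contradiction that the level sets $\Sigma_t=\{f=t\}$ are compact (Lemma~\ref{compactness of level sets of f}) by running the level-set flow and applying Yau's linear volume lower bound to the Ricci-flat hypersurface $\Sigma_{-2}$; compactness of sublevel sets follows by a line/Cheeger--Gromoll argument (Lemma~\ref{compactness of sublevel set of f}), and the exponential decay of $S$ is obtained only after invoking the properness estimate Proposition~\ref{properness of f}. The splitting of $N$ is then read off from the compact Ricci-flat level set, whose universal cover is $\R\times N$. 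You instead exploit the covering structure directly: de Rham uniqueness forces every deck transformation to split as $(A_\gamma,B_\gamma)$, the fixed tip of $\Sigma$ makes $\gamma\mapsto B_\gamma$ injective and $\overline\Gamma$ free and properly discontinuous on $N$, and the resulting Riemannian submersion $q\colon M\to N/\overline\Gamma$ with fibre $\Sigma$ comes with the exact product-distance formula $d_M=\sqrt{\rho^2+d_{N/\overline\Gamma}^2}$. Fubini along $q$ then converts \eqref{weakly integrable S} into sublinear volume growth of the Ricci-flat base $N/\overline\Gamma$, so Yau's estimate (applied to the base rather than to a level set) forces it to be compact, and Cheeger--Gromoll on this compact quotient gives the splitting of $N$. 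Both proofs rest on the same two external inputs (Yau's linear growth, Cheeger--Gromoll), but your route is shorter, avoids the auxiliary Lemmas~\ref{compactness of level sets of f}--\ref{compactness of sublevel set of f} and Proposition~\ref{properness of f}, and delivers the sharp decay $C^{-1}e^{-r}\le S\le Ce^{-r}$ (hence any $\alpha\in(0,1)$) directly from the distance formula; the paper's approach, by contrast, stays intrinsic to the soliton geometry of $M$ and does not require unpacking the deck-group action.
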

\begin{remark} It can be seen from the above proposition that $M$ must have bounded curvature and $S$ is integrable. Using an estimate in \cite{Deruelle-2012}, the level sets of $f$ (function constructed in Lemma \ref{existence of potential f}) have uniformly bounded diameter, hence $\a$ in the above proposition can indeed be chosen to be $1$. Alternatively, $\alpha=1$ also follows from the curvature estimates in \cite{Chan-2019} and \cite{MunteanuSungWang-2017}.
\end{remark}
To prepare for the proof of Proposition \ref{compactness of N}, we recall some basic properties of $\Sigma$ (see \cite{Chowetal-2007}). Let $\tr$ and $\tf$ be the distance function of $\Sigma$ from its origin and the potential function respectively. In the geodesic polar coordinate, the metric is given by
$$g_{\Sigma}=d\tr^2+4\tanh^2(\frac{\tr}{2})d\theta^2.$$
We also have $\tf=\tf(\tr)=-2\log\cosh{\frac{\tr}{2}}$ and the scalar curvature
\be \label{scalar curvature of cigar}
S_{\Sigma}=\frac{1}{\cosh^2(\frac{\tr}{2})}=e^{\tf}>0.
\ee
Let $\rho: \Sigma\times N\to M$ and $\pi: \Sigma\times N\to \Sigma$ be the Riemannian covering and the projection into the first factor respectively. $\tr\circ \pi$ and $\tf\circ \pi$ are functions defined on $\Sigma\times N$. By abuse of notation, we shall not distinguish  $\tr\circ \pi$ from $\tr$, $\tf\circ \pi$ from $\tf$, namely for all $(a, b)\in$ $\Sigma\times N$, $\tr\circ \pi (a, b)$ and $\tf\circ \pi (a, b)$ will be written as $\tr (a, b)$ and $\tf (a, b)$ respectively.
\begin{lma}\label{existence of potential f} Let M be the manifold as in Proposition \ref{compactness of N}.
There is a smooth function $f$ on $M$ such that $f \circ \rho=\tf$. With this $f$, $(M,g,f)$ is a complete gradient steady Ricci soliton.
\end{lma}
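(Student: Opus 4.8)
The plan is to show that the function $\tf$ on $\Sigma\times N$ (i.e.\ $\tf\circ\pi$ in the notation of the previous paragraph) is invariant under the group $\Gamma$ of deck transformations of the covering $\rho:\Sigma\times N\to M$, so that it descends to a smooth function $f$ on $M$ with $f\circ\rho=\tf$; the steady soliton equation and completeness then transfer to $M$ because $\rho$ is a local isometry. For the invariance, write $M=(\Sigma\times N)/\Gamma$ with $\Gamma\subset\mathrm{Isom}(\Sigma\times N)$ acting freely and properly discontinuously, and note that on the product $\Ric_{\Sigma\times N}$ is block diagonal with $\Sigma$-block $\frac{S_\Sigma}{2}g_\Sigma>0$ and $N$-block $0$, so that $\ker\Ric_{\Sigma\times N}=TN$. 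Since any isometry preserves the Ricci tensor, every $\gamma\in\Gamma$ preserves the parallel, totally geodesic distributions $TN$ and $T\Sigma=(TN)^\perp$, hence carries the leaves $\{a\}\times N$ and $\Sigma\times\{b\}$ to leaves of the same families; comparing the two leaf decompositions (the standard splitting argument for Riemannian products) yields $\gamma=(\gamma_1,\gamma_2)$ with $\gamma_1\in\mathrm{Isom}(\Sigma)$, $\gamma_2\in\mathrm{Isom}(N)$, so $\pi\circ\gamma=\gamma_1\circ\pi$. (Equivalently, one invokes uniqueness of the de Rham decomposition together with the fact that the non-flat $2$-dimensional $\Sigma$ cannot be isometric to any de Rham factor of the Ricci-flat $N$.)

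Next I would pin down $\gamma_1$. The scalar curvature $S_\Sigma=1/\cosh^2(\tr/2)=e^{\tf}$ is a strictly decreasing function of $\tr$, hence attains its maximum at the unique point $o\in\Sigma$ with $\tr=0$. Since $\gamma_1$ is an isometry of $\Sigma$ it preserves $S_\Sigma$ and therefore fixes $o$, so $\tr\circ\gamma_1=d_\Sigma(\cdot,\gamma_1(o))=\tr$ and consequently $\tf\circ\gamma_1=\tf$ on $\Sigma$. Combining with the previous step, $\tf\circ\gamma=\tf\circ\gamma_1\circ\pi=\tf\circ\pi=\tf$ on $\Sigma\times N$, so $\tf$ is $\Gamma$-invariant. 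By covering space theory it descends to a smooth function $f$ on $M$ with $f\circ\rho=\tf$, and $f$ is smooth because $\rho$ is a local diffeomorphism.

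Finally, since $\rho$ is a Riemannian covering, $\rho^*g=g_\Sigma\oplus g_N$ and $\rho^*\big(\Ric_M+\na^2 f\big)=\Ric_{\Sigma\times N}+\na^2\tf$. As $\tf$ depends only on the $\Sigma$-factor and $(\Sigma,g_\Sigma,\tf)$ is the cigar steady soliton, this last tensor equals $(\Ric_\Sigma+\na^2_\Sigma\tf)\oplus 0=0$; because $\rho$ is a local diffeomorphism, $\Ric_M+\na^2 f=0$ on $M$. Completeness of $M$ is inherited from that of the product $\Sigma\times N$ via the covering $\rho$, so $(M,g,f)$ is a complete gradient steady Ricci soliton. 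The only point requiring genuine care is the first step — that the deck transformations split compatibly with the product and do not mix the cigar with any factor of $N$ — together with the identification of the tip $o$ of the cigar as an isometry-invariant point; once these are in hand the remaining steps are routine bookkeeping with local isometries and covering maps.
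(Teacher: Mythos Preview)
Your proof is correct, but the paper takes a shorter and more elementary route that avoids the structural claim that every deck transformation splits as a product $(\gamma_1,\gamma_2)$. The paper simply observes that if $\rho(a,b)=\rho(c,d)$ then, since $\rho$ is a local isometry and $N$ is Ricci flat,
\[
S_\Sigma(a)=S_{\Sigma\times N}(a,b)=S_{\Sigma\times N}(c,d)=S_\Sigma(c);
\]
because $S_\Sigma=e^{\tf}$ is a strictly monotone function of $\tf$ (equivalently of $\tr$), this forces $\tf(a,b)=\tf(c,d)$ directly, so $\tf$ is constant on fibers and descends to $M$. The soliton equation and completeness then pass to $M$ exactly as you argue. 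In effect the paper uses the \emph{same} key fact you use to pin down $\gamma_1$---that the cigar's scalar curvature determines $\tf$---but applies it immediately to fibers rather than first proving $\Gamma$ respects the product. Your approach buys the additional structural information that $\Gamma\subset\mathrm{Isom}(\Sigma)\times\mathrm{Isom}(N)$ with the first component fixing the tip, which is true and potentially useful elsewhere, but it requires the de Rham/kernel argument (and some care ruling out that a de Rham factor of $N$ could match $\Sigma$), none of which the paper needs for this lemma.
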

\begin{proof}
Let $(a, b)$ and $(c, d)$ $\in$ $\Sigma\times N$ such that $\rho(a, b)=\rho(c, d)$. Since $N$ is Ricci flat,
$$S_{\Sigma}(a)=S_{\Sigma\times N}(a, b)=S_{\Sigma\times N}(c, d)=S_{\Sigma}(c).$$

By (\ref{scalar curvature of cigar}), we conclude that $\tr(a, b)=\tr(a)=\tr(c)=\tr(c, d)$ and $\tf(a, b)=\tf(c, d)$. $\tf$ respects the quotient map $\rho$ and thus induces a map $f: M\to \R$ such that $f \circ \rho=\tf$. $(M,g,f)$ is a gradient steady Ricci soliton then follows from the facts that $\rho$ is a local isometry and $\tf$ is a potential function for the steady soliton $\Sigma\times N$.
\end{proof}

\begin{lma} \label{compactness of level sets of f} Let f be the function as in Lemma \ref{existence of potential f}. The level sets $\Sigma_t:=\{f=t\}$ are connected compact embedded hypersurfaces in $M$ for all $t<0$.
\end{lma}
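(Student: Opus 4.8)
The claim is that the level sets $\Sigma_t = \{f = t\}$ of the function $f$ constructed in Lemma~\ref{existence of potential f} are connected, compact, embedded hypersurfaces in $M = \Sigma \times N / \sim$ for all $t < 0$. Since $f \circ \rho = \tf = -2\log\cosh(\tr/2)$ and $\tf$ depends only on the $\Sigma$-factor, the pulled-back level set $\rho^{-1}(\Sigma_t)$ inside $\Sigma \times N$ is exactly $\{\tr = \tr_t\} \times N$, where $\tr_t > 0$ is the unique radius with $-2\log\cosh(\tr_t/2) = t$ (this uses that $\tf$ is strictly monotone in $\tr$, so $t < 0$ corresponds to $\tr_t > 0$, while $t = 0$ degenerates to the origin). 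Thus $\rho^{-1}(\Sigma_t) = C_{\tr_t} \times N$, where $C_{\tr_t} \cong S^1$ is the geodesic circle of radius $\tr_t$ in the cigar. The plan is to transfer connectedness, compactness, and embeddedness of $\Sigma_t$ from the structure of $\rho^{-1}(\Sigma_t)$ via the covering map $\rho$.

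**Key steps.**
First I would observe that $\Sigma_t$ is a smooth embedded hypersurface: $t < 0$ is a regular value of $f$ because $\na f \neq 0$ exactly where $\tr > 0$, i.e. on all of $M$ except the image of the (cigar) origin cross-section, and this exceptional set is precisely $f^{-1}(0)$; hence $t<0$ is regular and $\Sigma_t$ is a properly embedded smooth hypersurface by the regular value theorem (properness follows once compactness is established). Second, for compactness: $\rho$ restricted to $C_{\tr_t}\times N$ is a covering onto $\Sigma_t$, so it suffices to show $\Sigma_t$ is compact, which I would get by showing $\Sigma_t$ is bounded and closed in $M$ — closedness is automatic, and boundedness will follow from Proposition~\ref{properness of f} (the bound $r + D \ge -f \ge \alpha r - D$, which applies here since $M$ is a non-Ricci-flat steady soliton with $\Ric \geq 0$ and $S \to 0$, the latter being clear from $S = S_\Sigma = e^{\tf} \to 0$), giving $\Sigma_t \subset \{r \le (|t| + D)/\alpha\}$, a bounded hence (by completeness) relatively compact set. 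Third, for connectedness: the deck transformation group $\Gamma$ acts on $\Sigma \times N$ preserving the splitting and the $\Sigma$-factor projection (because it preserves $\tf$, hence $\tr$, hence must act isometrically on each factor — here I would invoke that isometries of $\Sigma \times N$ respecting the $\tr$-level structure decompose as products, using that $\Sigma$ and $N$ have no common de Rham factor in a way that mixes them; more carefully, $\Gamma$ acts on $\Sigma$ by rotations fixing the origin, possibly trivially, and on $N$ by isometries). Then $\Sigma_t = (C_{\tr_t} \times N)/\Gamma$, and since $C_{\tr_t} \times N$ is connected and the quotient of a connected space is connected, $\Sigma_t$ is connected.

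**Main obstacle.**
The delicate point is the structure of the deck group action: I need that $\Gamma$ preserves the product splitting $\Sigma \times N$ so that the quotient of $C_{\tr_t}\times N$ makes sense as a ``cross-section.'' This follows because every $\phi \in \Gamma$ is an isometry preserving $\tf$ (since $f\circ\rho = \tf$ forces $\tf\circ\phi = \tf$), hence preserving $\tr$, hence preserving the orthogonal distribution $\ker d\tr$ and its complement; a de Rham-type argument then shows $\phi$ splits as $\phi_\Sigma \times \phi_N$. The rest is then routine: connectedness and compactness pass to quotients, and embeddedness is the regular value theorem. I would also note in passing that the same analysis pins down the topology of $\Sigma_t$ (a circle bundle over $N/\Gamma_N$), which is what the subsequent sections will exploit, but for this lemma only connectedness, compactness, and embeddedness are needed.
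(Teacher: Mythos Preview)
Your treatment of embeddedness (regular value theorem, since $\{\na f=0\}=\{f=0\}$) and connectedness (image of the connected set $C_{\tr_t}\times N$ under $\rho$) is fine and matches the paper. The problem is the compactness step.

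You argue: $S\to 0$ at infinity, hence Proposition~\ref{properness of f} applies, hence $-f\geq \alpha r - D$, hence $\Sigma_t\subset\{r\leq (|t|+D)/\alpha\}$ is bounded. But the claim ``$S\to 0$ is clear from $S=e^{\tf}$'' is circular. On $M$ we have $S(x)=e^{f(x)}$, so $S\to 0$ at infinity is \emph{equivalent} to $f\to -\infty$ at infinity, i.e.\ to compactness of all superlevel sets $\{f\geq -A\}$. In the paper this is exactly Lemma~\ref{compactness of sublevel set of f}, which is proved \emph{after} and \emph{using} the present lemma. So you are invoking Proposition~\ref{properness of f} under a hypothesis that is logically downstream of what you are trying to prove.

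More to the point, compactness of $\Sigma_t$ does not follow from the covering structure alone and genuinely requires the hypothesis $\liminf_{r\to\infty} r^{-1}\int_{B_r}S=0$, which you never use. Indeed $\rho^{-1}(\Sigma_t)\cong \mathbb{S}^1\times N$ with $N$ only assumed complete Ricci flat; if, say, the deck group acted trivially on the $N$-factor and $N$ were $\R^{m-2}$, then $\Sigma_t$ would be noncompact. The paper's proof supplies the missing mechanism: assuming $\Sigma_{-2}$ noncompact, it uses that $\Sigma_{-2}$ is Ricci flat (hence, by Yau's linear volume growth lower bound, has intrinsic balls of area $\geq cR$), together with a coarea/level-set-flow estimate, to produce a lower bound $\int_{B_R}S\geq c'R$. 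This contradicts the weak integrability assumption. That use of Yau's estimate against the integrability hypothesis is the actual content of the lemma, and it is absent from your argument.
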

\begin{proof} Note that $0=\max_{M}f=\max_{\Sigma\times N}\tf$. $M$ has $\Ric\geq 0$ and thus both $f$ and $\tf$ are concave functions with
$$\{f=0\}=\{\na f=0\} \text{  and  } \{\tf=0\}=\{\na \tf=0\}.$$
For $t<0$, $\Sigma_t:=\{f=t\}$ are embedded hypersurfaces and complete w.r.t. the induced metric from $(M,g)$. Since $\rho^{-1}(\Sigma_t)=\{\tf=t\}$ is diffeomorphic to $\mathbb{S}^1\times N$, $\Sigma_t$ is connected for all $t<0$.
Let $\psi_t$ be the flow of $\frac{\na f}{|\na f|^2}$ with $\psi_0$ be the identity map.
Using the level set flow $\psi_t$, we see that $\Sigma_t =\{f=t\}$ are diffeomorphic to each other for all $t<0$. Moreover, $\psi_t(\Sigma_{-2})=\Sigma_{t-2}$, for $t$ $\in [0,1]$. Therefore, it suffices to show that $\Sigma_{-2}$ is compact. Assume by contradiction that $\Sigma_{-2}$ is not compact. On $\{\tf\leq -1\}\subseteq \Sigma\times N$, by (\ref{scaling is 1}) and (\ref{scalar curvature of cigar})
$$|\na\tf|^2=1-e^{\tf}\geq 1-e^{-1}.$$
Using $\rho_*\na \tf= \na f$, there exists $\delta>0$ such that on $\{f\leq -1\}$
\be\label{lower bdd of na f}
|\na f|\geq \delta.
\ee

Let $v\in T\Sigma_{-2}$, then
\begin{eqnarray*}
\frac{\partial}{\partial t}\psi_t^*g(v,v)&=&\psi_t^*(L_{\frac{\na f}{|\na f|^2}}g)(v,v)\\
&=&2\na^2 f(\psi_{t*}v,\psi_{t*}v)|\na f|^{-2}\\
&=&-2\Ric(\psi_{t*}v,\psi_{t*}v)|\na f|^{-2}\\
&\leq& 0.
\end{eqnarray*}
Since the Ricci curvature of $M$ is bounded, there is a constant $C>0$ such that for $t$ $\in [0,1]$
\be\label{level set metric comparison}
Cg_0\leq g_t\leq g_0
\ee
where $g_t:=\psi_t^*g$ on $\Sigma_{-2}$. Let $B^t_R(q)$ be the intrinsic ball of $(\Sigma_{t}, g)$ with radius $R$ centered at $q$. It is not difficult to see that for $t<0$
\be\label{intrinsic extrinsic ball}
B^t_R(q)\subseteq \Sigma_{t}\cap B_R(q), \text{  for  } q \in \Sigma_{t},
\ee
where $B_R(q)$ is the geodesic ball in the ambient manifold $(M,g)$. Fix any $q_0$ in $\Sigma_{-2}$. Let $r_0=r(q_0):=d(q_0,p_0)$, $p_0$ $\in$ $M$ is a fixed reference point.
Hence by (\ref{lower bdd of na f}) for $t$ $\in [0,1]$
\be
d(\psi_t(q_0), q_0)\leq \int_0^t\frac{1}{|\na f|(\psi_s(q_0))}ds\leq \frac{t}{\delta}.
\ee
Next we show that for large $R>0$,
\be\label{moving center ball}
B_{R-\frac{1}{\delta}}(\psi_t(q_0))\subseteq B_{R+r_0}(p_0).
\ee
For all $z$ $\in$ $L.H.S.$
\begin{eqnarray*}
d(z,p_0)&\leq& d(z,\psi_t(q_0))+d(\psi_t(q_0),q_0)+d(q_0,p_0)\\
        &<& R-\frac{1}{\delta}+\frac{t}{\delta}+r_0\\
        &\leq& R+r_0,
\end{eqnarray*}
we proved the inclusion (\ref{moving center ball}). To proceed, we also need the following inclusion: for $t$ $\in$ $[0,1]$
\be \label{evolution of Ball}
\psi_t(B^{-2}_{R-\frac{1}{\delta}}(q_0))\subseteq B^{t-2}_{R-\frac{1}{\delta}}(\psi_t(q_0)),
\ee
where $B^t_R(q)$ is defined before (\ref{intrinsic extrinsic ball}). Let $z$ $\in$ $B^{-2}_{R-\frac{1}{\delta}}(q_0)$ and $\a$ $\subseteq\Sigma_{-2}$ be an intrinsic minimizing geodesic w.r.t $(\Sigma_{-2},g)$ joining $z$ and $q_0$. The length of $\psi_t\circ \a$ is given by
\begin{eqnarray*}
l_g(\psi_t\circ \a)&=&\int|d\psi_t(\dot{\a})|_g\\
&=&\int|\dot{\a}|_{\psi_t^*g}\\
&\leq&\int|\dot{\a}|_{g},
\end{eqnarray*}
where we use (\ref{level set metric comparison}) in the last inequality and (\ref{evolution of Ball}) follows. From (\ref{scalar curvature of cigar}), we see that there is a positive constant $C_0$ such that on $\{-2\leq f\}$,
\be\label{lower bound of S on annulus of f}
S\geq C_0.
\ee

We are going to derive a contradiction using the weak integrability condition (\ref{weakly integrable S}) of $S$. By (\ref{lower bound of S on annulus of f}), coarea formula, (\ref{moving center ball}), (\ref{intrinsic extrinsic ball}), (\ref{evolution of Ball}),
\begin{eqnarray*}
\int_{B_{R+r_0}(p_0)}S&\geq& \int_{B_{R+r_0}(p_0)\cap\{ -2\leq f\leq -1\}}S\\
&\geq& C_0\int_{B_{R+r_0}(p_0)\cap\{ -2\leq f\leq -1\}}\\
&=&C_0\int_0^1\int_{B_{R+r_0}(p_0)\cap\Sigma_{t-2}}\frac{1}{|\na f|}d\sigma_t dt\\
&\geq&C_0\int_0^1\int_{B_{R-\frac{1}{\delta}}(\psi_t(q_0))\cap\Sigma_{t-2}}d\sigma_t dt\\
&\geq&C_0\int_0^1\int_{B^{t-2}_{R-\frac{1}{\delta}}(\psi_t(q_0))}d\sigma_t dt\\
&\geq&C_0\int_0^1\int_{\psi_t(B^{-2}_{R-\frac{1}{\delta}}(q_0))}d\sigma_t dt\\
&=&C_0\int_0^1\int_{B^{-2}_{R-\frac{1}{\delta}}(q_0)}\psi_t^*d\sigma_t dt\\
&\geq&C_0C_1\int_0^1\int_{B^{-2}_{R-\frac{1}{\delta}}(q_0)}d\sigma_0 dt\\
&=&C_0C_1 A(B^{-2}_{R-\frac{1}{\delta}}(q_0)),
\end{eqnarray*}
where we also use (\ref{level set metric comparison}) in the last inequality, $d\sigma_t$ and $A(B^{-2}_{R-\frac{1}{\delta}}(q_0))$ denote the volume element of $\Sigma_{t-2}$ and the volume of the intrinsic geodesic ball $B^{-2}_{R-\frac{1}{\delta}}(q_0)$ in $(\Sigma_{-2},g)$ respectively. One can check readily that the induced metric on $\{\tf=-2\}$ is given by
$$4(1-e^{-2})d\theta^2+g_N,$$
where $g_N$ is the metric on $N$. $\{\tf=-2\}$ is obviously Ricci flat. $\rho^{-1}(\Sigma_{-2})=\{\tf=-2\}$ and thus $\Sigma_{-2}$ is covered by $\{\tf=-2\}$. Hence $\Sigma_{-2}$ is also Ricci flat (in particular $\Ric\geq 0$). If $\Sigma_{-2}$ is noncompact, then by Yau's lower volume estimate on noncompact manifolds with $\Ric\geq 0$ (see \cite{Yau-1985} and \cite{Li-2012}), there exists positive constant $C_2$ independent on all large $R$ such that
$$A(B^{-2}_{R-\frac{1}{\delta}}(q_0))\geq C_2(R-\frac{1}{\delta}).$$
From this we see that for all large $R$
\begin{eqnarray*}
\frac{1}{R+r_0}\int_{B_{R+r_0}(p_0)}S &\geq& C_0C_1 \frac{A(B^{-2}_{R-\frac{1}{\delta}}(q_0))}{{R+r_0}}\\
&\geq&C_0C_1C_2\frac{R-\frac{1}{\delta}}{R+r_0},
\end{eqnarray*}
contradicting to the weak integrability condition (\ref{weakly integrable S}) of $S$. We proved that $\Sigma_{-2}$ and hence $\Sigma_{t}$ are compact as long as $t<0$.
\end{proof}

\begin{lma}\label{compactness of sublevel set of f}
Let f be the function as in Lemma \ref{existence of potential f}. $\{f\geq-A\}$ is compact subset of $M$ for all $A>0$
\end{lma}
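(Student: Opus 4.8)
The plan is to combine the compactness of the individual level sets $\Sigma_t$ (Lemma \ref{compactness of level sets of f}) with the level-set flow $\psi_t$ of $\na f/|\na f|^2$ to exhaust the sublevel region $\{f\ge -A\}$ by a compact family. First note that the statement to prove is really about $\{-A\le f\le 0\}$, since $\max_M f=0$; write the region as $\{f=0\}\cup\bigl(\bigcup_{-A\le t<0}\Sigma_t\bigr)$. The set $\{f=0\}=\{\na f=0\}$ is the image under $\rho$ of $\{\tf=0\}=\{\na\tf=0\}$, which in $\Sigma\times N$ is $\{o_\Sigma\}\times N$; but $N$ here need not be compact, so $\{f=0\}$ itself is not obviously compact directly — I would instead handle the closed region all at once.

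The key step is to use the flow $\psi_t$ to transport the compact level set $\Sigma_{-A/2}$ (or any $\Sigma_{t_0}$ with $t_0<0$) forward and obtain every $\Sigma_t$ for $t_0\le t<0$ as its diffeomorphic image, hence compact, and then control the limit as $t\uparrow 0$. Concretely: on $\{f\le -\e\}$ for fixed small $\e>0$ we have the lower bound $|\na f|\ge\delta(\e)>0$ (this follows as in the proof of Lemma \ref{compactness of level sets of f} from $|\na\tf|^2=1-e^{\tf}$ on $\{\tf\le -\e\}$), so $\psi_t$ is defined and smooth on $\Sigma_{-A}$ for all $t\in[0,A)$ and carries $\Sigma_{-A}$ onto $\Sigma_{t-A}$; moreover $d(\psi_t(q),q)\le t/\delta$ uniformly, so $\psi_t(\Sigma_{-A})\subseteq B_{A/\delta}(\Sigma_{-A})$, a bounded neighborhood of the compact set $\Sigma_{-A}$. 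Therefore $\bigcup_{-A\le t<0}\Sigma_t=\bigcup_{0\le t<A}\psi_t(\Sigma_{-A})$ is a \emph{bounded} subset of $M$: it lies in the closed ball of radius $A/\delta$ about $\Sigma_{-A}$. Its closure is then compact because $(M,g)$ is complete (Hopf–Rinow), and the closure is contained in $\{f\ge -A\}$ since $f$ is continuous and $f\ge -A$ on the set; conversely $\{f\ge -A\}$ is closed, so $\{f\ge -A\}=\overline{\{f=0\}\cup\bigcup_{-A\le t<0}\Sigma_t}$ is this compact set. (Alternatively one can bypass $\psi_t$ entirely: Proposition \ref{compactness of N}, or rather the estimate $S\ge C^{-1}e^{-r/\a}$ from it together with $S=e^f$ being bounded below on $\{f\ge -A\}$, forces $r$ bounded on $\{f\ge -A\}$; but since Lemma \ref{compactness of sublevel set of f} is used \emph{in the course of proving} Proposition \ref{compactness of N} via Lemma \ref{compactness of level sets of f}, I prefer the self-contained flow argument.)

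The main obstacle is the behavior near the maximum set $\{f=0\}$, where $|\na f|\to 0$ and the flow $\psi_t$ degenerates, so one cannot flow "all the way up" in finite time; the fix above is to observe that we only need the flow on $[0,A)$ starting from $\Sigma_{-A}$ and that the \emph{distance} traveled, $\le t/\delta$, stays bounded by $A/\delta$ even though the parameter only reaches $A$ and not beyond — boundedness of the swept region, not surjectivity of a single $\psi_t$, is what matters. One should also record that $\delta=\delta(A)$ depends on $A$ (indeed on the value $-A$ via $1-e^{-A}$), which is harmless since $A$ is fixed. With boundedness in hand, completeness of $M$ upgrades it to compactness, finishing the proof.
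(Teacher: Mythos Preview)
Your flow approach is genuinely different from the paper's and can be made to work, but there is a real gap in the distance estimate. You claim $d(\psi_t(q),q)\le t/\delta$ for $t\in[0,A)$ with $\delta=\delta(A)$ given, as you say, by $1-e^{-A}$. That lower bound $|\na f|^2\ge 1-e^{-A}$ holds only at the \emph{starting} level $\Sigma_{-A}$: since $|\na f|^2=1-e^f$ and the flow increases $f$, one has $|\na f|(\psi_s(q))=\sqrt{1-e^{s-A}}$, which is strictly \emph{smaller} than $\delta(A)$ for every $s>0$ and tends to $0$ as $s\to A$. So your ``fix'' in the last paragraph does not fix anything: the bound $t/\delta(A)$ is simply false, and if instead one uses the bound valid on the whole segment, namely $\delta(A-t)=\sqrt{1-e^{-(A-t)}}$, then $t/\delta(A-t)\to\infty$ as $t\to A$.

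The argument is rescued by integrating exactly rather than bounding the integrand from below:
\[
d(\psi_t(q),q)\le\int_0^t\frac{ds}{|\na f|(\psi_s(q))}=\int_0^t\frac{ds}{\sqrt{1-e^{s-A}}}\le\int_0^A\frac{ds}{\sqrt{1-e^{s-A}}}=2\,\mathrm{arctanh}\sqrt{1-e^{-A}}<\infty,
\]
the point being that $1-e^{s-A}\sim A-s$ near $s=A$, so the singularity is integrable. With this uniform bound in hand, the rest of your outline (compactness of $\Sigma_{-A}$, Hopf--Rinow, and $\{f=0\}\subseteq\overline{\{-A\le f<0\}}$ because $\{f=0\}$ has empty interior) goes through.

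For comparison, the paper does not use the flow here at all. It argues by contradiction: if $\{f\ge -A\}$ were unbounded, minimizing geodesics from points $x_k\to\infty$ in it to points $y_k$ with $f(y_k)\to-\infty$ must all meet the compact $\Sigma_{-A-1}$, hence subconverge to a line; Cheeger--Gromoll then splits $M\cong M_1\times\R$, and the $\R$-factor forces $\int_{B_R}S\gtrsim R$, contradicting (\ref{weakly integrable S}). Your route (once corrected) is more direct and does not reinvoke (\ref{weakly integrable S}); the paper's is softer in that it needs no explicit formula for $|\na f|$.
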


\begin{proof}
Suppose it is not true for some $A$, then by the completeness of $M$, $\{f\geq-A\}$ is unbounded and there exists a sequence $x_k\to \infty$ with $f(x_k)\geq -A$. Pick a sequence $y_k$ with $f(y_k)\to -\infty$, let $\gamma_k$ be a normalized minimizing geodesic joining $x_k$ to $y_k$, then $\gamma_k\cap \Sigma_{-A-1}\neq \phi$. By Lemma \ref{compactness of level sets of f}, $\Sigma_{-A-1}$ is compact and it implies that after passing to a subsequence, $\gamma_k$ converges to a line $\gamma_{\infty}$. By Cheeger Gromoll splitting theorem (see \cite{CheegerGromoll-1971} and \cite{Li-2012}), $M$ splits isometrically as $M_1\times \R$ for some complete manifold $M_1$. Let $(\a,\b)\in M_1\times \R$, then $S_{M_1\times \R}(\a,\b)=S_{M_1}(\a)>0$. Moreover, one have for all $R>0$
$$B_R(\a,\b)\supseteq B^{M_1}_{\frac{R}{\sqrt{2}}}(\a)\times B^{\R}_{\frac{R}{\sqrt{2}}}(\b).$$
Then
\begin{eqnarray*}
\int_{B_R(\a,\b)}S &\geq& \int_{B^{M_1}_{\frac{R}{\sqrt{2}}}(\a)\times B^{\R}_{\frac{R}{\sqrt{2}}}(\b)}S_{M_1}\\
&\geq& \sqrt{2}R\int_{B^{M_1}_{\frac{R}{\sqrt{2}}}(\a)}S_{M_1}\\
&\geq& \sqrt{2}R\int_{B^{M_1}_{\frac{1}{\sqrt{2}}}(\a)}S_{M_1},
\end{eqnarray*}
again contradicting to the integrability condition (\ref{weakly integrable S}) of $S$.
\end{proof}
\begin{lma}S decays exponentially.
\end{lma}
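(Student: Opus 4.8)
The plan is to exploit the product structure $\rho : \Sigma \times N \to M$ together with the explicit formula $S_\Sigma = e^{\tf}$ on the cigar, and to transfer the exponential behavior of $e^{\tf}$ downstairs along $\rho$. First I would recall from Lemma~\ref{existence of potential f} that $S = S_\Sigma \circ \pi = e^{\tf}$ on the cover $\Sigma \times N$ (since $N$ is Ricci flat), so $S \circ \rho = e^{\tf}$, and hence it suffices to compare $\tf$ with the distance function $r$ pulled back to $\Sigma \times N$. The key geometric input already available is Lemma~\ref{compactness of sublevel set of f}: the sublevel sets $\{f \ge -A\}$ are compact for every $A>0$; combined with Proposition~\ref{properness of f} (whose hypotheses---non-Ricci-flatness, $\Ric \ge 0$, $S \to 0$---I will need to verify hold here, the decay $S \to 0$ being exactly what compactness of $\{f \ge -A\}$ for all $A$ gives via $S = e^f$), this yields $-f$ equivalent to the ambient distance $r$: for every $\a \in (0,1)$ there is $D>0$ with $r + D \ge -f \ge \a r - D$ on $M$.

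Given that two-sided bound, the conclusion is immediate: from $S \circ \rho = e^{\tf} = e^{f \circ \rho}$ we get $S = e^f$ on $M$, so
\[
  S = e^{f} \le e^{-\a r + D} = e^{D} e^{-\a r},
\]
and for the reverse inequality, using $-f \le r + D$,
\[
  S = e^{f} \ge e^{-r - D} = e^{-D} e^{-r} \ge e^{-D} e^{-r/\a}.
\]
Setting $C = e^{D}$ (enlarging if needed) gives $C^{-1} e^{-r/\a} \le S \le C e^{-\a r}$ on $M$, which is precisely the exponential decay asserted (and indeed the two-sided estimate of Proposition~\ref{compactness of N}). So the lemma reduces entirely to producing the linear-growth bound $-f \asymp r$.

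The main obstacle is therefore establishing $-f \ge \a r - D$, i.e. that $-f$ does not grow much slower than $r$. The upper bound $-f \le r + D$ is soft (it follows from $|\na f| \le 1$ and integrating along minimizing geodesics from the point where $f$ is maximal). For the lower bound I would invoke Proposition~\ref{properness of f} directly, but its hypothesis "$S \to 0$ as $r \to \infty$" must first be secured: if $S$ did not tend to $0$ there would be $x_k \to \infty$ with $S(x_k) \ge c > 0$, i.e. $f(x_k) \ge \log c$, contradicting the compactness of $\{f \ge \log c\}$ from Lemma~\ref{compactness of sublevel set of f}. With $S \to 0$ in hand, Proposition~\ref{properness of f} applies verbatim (the soliton is complete, non-Ricci-flat, and $\Ric \ge 0$ everywhere since $M$ is covered by $\Sigma \times N$ which has $\Ric \ge 0$), delivering the lower bound and completing the proof. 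An alternative, self-contained route avoiding Proposition~\ref{properness of f} is to run the flow $\psi_t$ of $\na f/|\na f|^2$ backward from a fixed level set $\Sigma_{-1}$: using $|\na f| \ge \delta$ on $\{f \le -1\}$ (from $|\na f|^2 = 1 - S$ and $S \le Ce^{-\a r} \to 0$, or more elementarily from $|\na\tf|^2 = 1 - e^{\tf}$) one gets $d(\psi_t(q), q) \le t/\delta$ while $f(\psi_t(q)) = f(q) + t$, so any point $q$ with $f(q) = -t - 1$ satisfies $r(q) \le t/\delta + \mathrm{diam}(\Sigma_{-1} \cup \{p_0\})$, i.e. $-f(q) \ge \delta\, r(q) - D'$; this is the same argument as in the proof of Proposition~\ref{properness of f}, specialized to the present product setting where everything is explicit.
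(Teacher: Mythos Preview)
Your proposal is correct and follows essentially the same route as the paper: first use the compactness of the superlevel sets $\{f\ge -A\}$ to force $S\to 0$ at infinity, then invoke Proposition~\ref{properness of f} to obtain $-f\asymp r$, and finally read off the exponential bounds from the explicit cigar formula. Your observation $S=e^{f}$ on $M$ (via $S\circ\rho=e^{\tf}=e^{f\circ\rho}$) is a mild streamlining of the paper's argument, which instead applies Proposition~\ref{properness of f} both on $M$ and on $\Sigma$ to compare $r$ with $\tr$ and then uses $S(x)=\cosh^{-2}(\tr/2)$ together with $e^{-\tr}\le\cosh^{-2}(\tr/2)\le 4e^{-\tr}$; the two arguments are equivalent and yours avoids the detour through $\tr$.
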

\begin{proof}
We first show that $S\to 0$ at infinity. Let $x_k$ $\in M$ $\to \infty$ as $k\to \infty$, $(a_k, b_k)\in \Sigma\times N$ such that $\rho(a_k, b_k)=x_k$. Then $\tr_k:=\tr(a_k, b_k)\to \infty$, where $\tr(a_k, b_k)$ is understood as $\tr\circ\pi(a_k, b_k)=\tr(a_k)$ as in the discussion right before Lemma \ref{existence of potential f}. Otherwise, it is bounded for some subsequence $k_j$, then $f(x_{k_j})$=$\tf(a_{k_j}, b_{k_j})=-2\log\cosh(\frac{\tr_{k_j}}{2})$ is bounded, by Lemma \ref{compactness of sublevel set of f}, $x_{k_j}$ has convergent subsequence, which is impossible. Hence $\tr_k:=\tr(a_k, b_k)\to \infty$. Then by (\ref{scalar curvature of cigar}), $S(x_k)=1/\cosh^2(\frac{\tr_k}{2})\to 0$ as $k\to \infty$. We deduce that $\lim_{r\to \infty}S=0$. By Proposition \ref{properness of f} (see also \cite{CarrilloNi-2009}), there exist $\alpha$ $\in (0,1)$ and $D>0$ such that,
\be
r(x)+D\geq -f(x)\geq \alpha r(x)-D \text{  on  } M
\ee
and
\be
\tr(a)+D\geq -\tf(a)\geq \alpha \tr(a)-D \text{  on  } \Sigma.
\ee
By the above two inequalities, we have for $x\in M$ and $(a, b)\in \Sigma\times N$ with $\rho(a, b)=x$,
$$\a r(x)-2D\leq \tr(a)\leq \frac{r(x)+2D}{\a}$$
and hence for some positive constants $C_1$ and $C_2$
\begin{eqnarray*}
S(x)=\frac{1}{\cosh^2(\frac{\tr(a)}{2})}&\leq& 4e^{-\tr(a)}\\
&\leq&  C_1e^{-\a r(x)},
\end{eqnarray*}
similarly for the lower bound,
\begin{eqnarray*}
S(x)&\geq& e^{-\tr(a)}\\
&\geq& C_2e^{-\frac{r(x)}{\a}}.
\end{eqnarray*}
\end{proof}

To finish the proof of Proposition \ref{compactness of N}, it remains to show  $N$ is isometric to $\R^l\times Q$, for some compact simply connected Ricci flat manifold $Q$.

\begin{proof}[Proof of Proposition \ref{compactness of N}] Fix any $t<0$,
 the induced metric on $\{\tf=t\}\subseteq\Sigma\times N$ is equal to
 $$4(1-e^{t})d\theta^2+g_N.$$
 We see that $\{\tf=t\}$ is isometric to $\mathbb{S}^1\times N$ and universally covered by $\R\times N$. By de Rham decomposition theorem (see \cite{KobayashiNomizu-1963}), $N$ is isometric to $\R^q\times \widetilde{N}$, where $q\geq 0$ and $\widetilde{N}=N_1\times N_2\times\cdots\times N_l$ is a product of irreducible simply connected Ricci flat manifolds $N_i$ with  $\text{dim}_{\R}N_i \geq 2$ $\forall i$. $\widetilde{N}$ has no line otherwise $N_i$ splits for some $i$, contradicting to its irreducibility.

Since $\rho$ is a Riemannian covering map, $\Sigma_t$ is covered by $\{\tf=t\}$ and is compact Ricci flat. We have by Cheeger Gromoll splitting theorem (see \cite{CheegerGromoll-1971} and \cite{Li-2012}) and the uniqueness of universal Riemannian covering space that
\be
\R\times N \cong_{\text{  isom  }}\R^{q+1}\times \widetilde{N} \cong_{\text{  isom  }}\R^k\times Q,
\ee
for some simply connected compact Ricci flat $Q$. Both $\widetilde{N}$ and $Q$ do not have a line, we must have $q+1=k$ and $\widetilde{N}$ is diffeomorphic to the compact $Q$. We are done with the proof of the proposition.
\end{proof}

\section{Proof of Theorems \ref{sec >0 and rS small} and \ref{Ric >0, Kahler and rS small}}
In this section, we will show Theorems \ref{sec >0 and rS small} and \ref{Ric >0, Kahler and rS small}. They essentially follow from the volume estimate on large geodesic balls:
\begin{prop}\label{volume estimate when rS small}
 Let $(M,g,f)$ be a real m dimensional non Ricci flat complete gradient steady Ricci soliton with $\Ric\geq 0$. Suppose there is a finite positive constant $l$ such that
\be\label{rS small condition}
\limsup_{r\to \infty} rS\leq l.
\ee
Then for all $p_0$ $\in M$ and $\varepsilon>0$, there exists positive constant $C$ such that for all large $R$,
$$V(B_R(p_0))\leq CR^{l+1+\varepsilon}.$$
In particular if $l<1$, then $M$ has subquadratic volume growth.
\end{prop}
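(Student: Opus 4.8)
The plan is to replace geodesic balls by sublevel sets of the potential function, which are much better behaved. Set $u:=-f$, so that $\Delta u=S$ and $|\nabla u|^2=1-S$. Since $\limsup_{r\to\infty}rS\le l<\infty$ forces $S\to 0$ at infinity, Proposition~\ref{properness of f} applies and gives, for each $\alpha\in(0,1)$, a constant $D$ with $\alpha r-D\le u\le r+D$ on $M$. Consequently $\{u\le t\}$ is compact for every $t$, $u/r\to1$, and for all $t$ large the scalar curvature on the level set $\{u=t\}$ satisfies $S\le(l+\tfrac{\varepsilon}{2})/t<1$ (combining $\limsup rS\le l$ with $r\ge t-D$ on that level set); in particular such $t$ is a regular value of $u$, $\{u=t\}$ is a smooth compact hypersurface, and $|\nabla u|\ge\sqrt{1-(l+\varepsilon/2)/t}$ there. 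Because $u\le r+D$ gives $B_R(p_0)\subseteq\{u<R+D\}$, it suffices to bound the volume $V(t)$ of $\{u<t\}$ by $C\,t^{\,l+1+\varepsilon}$ for $t$ large.

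The heart of the argument is a Gronwall-type estimate for $G(t):=\int_{\{u<t\}}S$. By the coarea formula $V$ and $G$ are absolutely continuous, with $V'(t)=\int_{\{u=t\}}|\nabla u|^{-1}\,d\sigma_t$ and $G'(t)=\int_{\{u=t\}}S\,|\nabla u|^{-1}\,d\sigma_t$ for $t$ large, while the divergence theorem applied to $\nabla u$ on the compact region $\{u\le t\}$ gives the extra identity $G(t)=\int_{\{u\le t\}}\Delta u=\int_{\{u=t\}}|\nabla u|\,d\sigma_t$. Since $1\ge|\nabla u|\ge 1-(l+\varepsilon/2)/t$ on $\{u=t\}$, these three integrals over $\{u=t\}$ are mutually comparable up to factors $1+o(1)$; feeding $S\le(l+\varepsilon/2)/t$ into $G'$ and chasing the comparisons yields, for $t$ large,
\[
G'(t)\ \le\ \frac{l+\varepsilon/2}{t}\,V'(t)\ \le\ \frac{l+\varepsilon/2}{t\,\bigl(1-(l+\varepsilon/2)/t\bigr)^{3/2}}\,G(t)\ \le\ \frac{l+\varepsilon}{t}\,G(t).
\]
As $G>0$ for $t$ large (because $S>0$ on the non-Ricci-flat soliton), integrating $(\log G)'\le(l+\varepsilon)/t$ gives $G(t)\le C_1 t^{\,l+\varepsilon}$; then $V'(t)\le 2G(t)\le 2C_1 t^{\,l+\varepsilon}$, and integration gives $V(t)\le C_2 t^{\,l+1+\varepsilon}$. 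Hence $V(B_R(p_0))\le V(R+D)\le C_2(R+D)^{\,l+1+\varepsilon}\le C'R^{\,l+1+\varepsilon}$ for $R$ large, and when $l<1$ the choice $\varepsilon=(1-l)/2$ makes the exponent strictly less than $2$.

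I expect the only genuine obstacle to be arranging the bookkeeping so that the differential inequality is actually \emph{closed} — that is, bounding $G'$ by $G/t$ rather than by the level-set area or by $V'$. This is exactly where $S\to0$ is used twice: once to make $\{u=t\}$ a regular level set and to control $|\nabla u|$ from below near infinity, so that $\int_{\{u=t\}}d\sigma_t$, $\int_{\{u=t\}}|\nabla u|\,d\sigma_t$ and $\int_{\{u=t\}}|\nabla u|^{-1}\,d\sigma_t$ differ only by factors $1+o(1)$; and once through the hypothesis $\limsup rS\le l$ to supply the crucial coefficient $(l+o(1))/t$ in front of $G$. The remaining points — compactness of $\{u\le t\}$, absolute continuity of $V$ and $G$, and tracking the $\bigl(1-(l+\varepsilon/2)/t\bigr)^{\pm}$ correction factors — are routine once Proposition~\ref{properness of f} is in hand.
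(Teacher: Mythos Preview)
Your proof is correct and follows essentially the same strategy as the paper: invoke Proposition~\ref{properness of f} to replace balls by sublevel sets of $-f$, then run a Gronwall inequality on a level-set quantity and integrate once more. The only cosmetic difference is that the paper tracks the area $A(t)$ of $\{-f=t\}$ and obtains $A'(t)=\int_{\{-f=t\}}(S-\Ric(n,n))/|\nabla f|^{2}\le (1+\delta)^{2}l\,t^{-1}A(t)$ via the first variation formula (using $\Ric\ge 0$ to drop the $\Ric(n,n)$ term), whereas you track $G(t)=\int_{\{u<t\}}S=\int_{\{u=t\}}|\nabla u|\,d\sigma$ via the divergence theorem; since $|\nabla u|\to 1$ these quantities are interchangeable and the resulting differential inequalities are the same up to $o(1)$ factors.
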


\begin{proof}By Proposition \ref{properness of f} (see also \cite{CarrilloNi-2009}), there are $\a$ $\in (0,1)$ and $D>0$ such that
\be\label{f in volume estimate}
r+D\geq -f\geq \a r-D.
\ee
Hence $f$ attains maximum, adding a constant if necessary, we may assume $\max_M f=0$. Since $f$ is concave, we have
$$\{\na f=0\}=\{f=0\}.$$
By (\ref{f in volume estimate}), $\{-f=t\}$ are compact embedded hypersurfaces and diffeomorphic to each other for all $t>0$.
Let $\delta$ be a small positive number to be chosen later. By $S\to 0$ and (\ref{scaling is 1}), for all large $r$,
\be\label{na f big in volume estimate}
|\na f|^2\geq (1+\delta)^{-1}.
\ee
Let $n:=-\frac{\na f}{|\na f|}$ be the normal of $\{-f=t\}$, the second fundamental form of $\{-f=t\}$ w.r.t. $n$ is given by $\frac{\Ric}{|\na f|}$. We consider the flow of $-\frac{\na f}{|\na f|^2}$ and denote it by $\phi_s$ with $\phi_0=id$, then
$$\phi_s(\{-f=t\})=\{-f=t+s\}.$$
 Let $A(t)$ be the area of the level set $\{-f=t\}$. By the first variation formula, (\ref{na f big in volume estimate}), (\ref{f in volume estimate}) and (\ref{rS small condition}) (see \cite{Deruelle-2012}), for all large $t$,
\begin{eqnarray*}
A'(t)&=&\int_{\{-f=t\}}\frac{S-\Ric(n,n)}{|\na f|^2}\\
&\leq&\int_{\{-f=t\}}\frac{S}{|\na f|^2}\\
&\leq& (1+\delta)\int_{\{-f=t\}} S\\
&\leq& \frac{(1+\delta)^2l}{t}A(t),
\end{eqnarray*}
we used $\Ric\geq 0$ in the first inequality. The above differential inequality implies that there is a $t_1$ such that for $t\geq t_1$
\be
A(t)\leq \frac{A(t_1)}{t_1^{(1+\delta)^2l}}t^{(1+\delta)^2l}.
\ee
Integrate the above inequality w.r.t $t$, together with (\ref{na f big in volume estimate}) and (\ref{f in volume estimate}), we see that for all large $R$,
$$V(B_R(p_0))\leq CR^{(1+\delta)^2l+1}.$$
Result then follows by choosing $\delta>0$ small enough such that $(1+\delta)^2l<l+\varepsilon$.
\end{proof}
\begin{proof}[Proof of Theorems \ref{sec >0 and rS small} and \ref{Ric >0, Kahler and rS small}]
By Proposition \ref{volume estimate when rS small}, $M$ has subquadratic volume growth. Theorem \ref{sec >0 and rS small} then follows from \cite{CatinoMastroliaMonticelli-2016}. Theorem \ref{Ric >0, Kahler and rS small} is now a consequence of Corollary \ref{subquadratic vol growth}. By (\ref{volume and integrability of S}), we know that $S$ satisfies (\ref{weakly integrable S}).
By Proposition \ref{compactness of N}, $S$ decays exponentially and thus $\displaystyle\limsup_{r \to \infty}   rS=0.$
\end{proof}


\begin{thebibliography}{10}
\bibitem{BakryEmery-1985}Bakry, D.; $\acute{E}mery$, M., Diffusions hypercontractives, Seminaire de Probabiliti$\acute{e}$s XIX, Lecture Notes in Math \textbf{1123} (1985), 177-206, Springer-Verlag, New York.
\bibitem{BernsteinMettler-2015}Bernstein, J.; Mettler, T., Two-Dimensional Gradient Ricci Solitons Revisited, International Mathematics Research Notices, Volume 2015, Issue 1, (2015), 78-98
\bibitem{Brendle-2013}Brendle, S., Rotational symmetry of self-similar solutions to the Ricci flow, Invent. Math. \textbf{194}(2013), 731-764.
\bibitem{Brendle-2014}Brendle, S., Rotational symmetry of Ricci solitons in higher dimensions, J. Diff. Geom. \textbf{97}, 191-214.
\bibitem{Cao-1996}Cao, H. D., Existence of gradient K\"{a}hler Ricci solitons, Elliptic and Parabolic Methods in Geometry(Minneapolis, MN, 1994), A K Peters, Wellesley, MA, (1996), 1-16.
\bibitem{Cao-2004}Cao, H. D., On dimension reduction to K\"{a}hler Ricci flow, Comm. Anal. Geom, Volume 12, Number 1(2004), 305-320.
\bibitem{Cao-2010}Cao, H. D., Recent progress on Ricci solitons, Adv. Lect. Math. \textbf{2} (2010), 1-38.
\bibitem{CaoChen-2012}Cao, H. D.; Chen, Q., On locally conformally flat gradient steady Ricci solitons, Trans. Amer. Math. Soc. \textbf{364} (2012), 2377-2391.
\bibitem{CarrilloNi-2009}Carrillo, J.; Ni, L., Sharp logarithmic Sobolev inequalities on gradient solitons and applications, Comm. Anal. Geom. \textbf{355} (4) (2009), 721-753.
\bibitem{CatinoMastroliaMonticelli-2016}Catino, C.; Mastrolia, P.; Monticelli, D.D., Classification of expanding and steady Ricci solitons with integral curvature decay, Geom. Topo. \textbf{20} (2016), 2665-2685.
\bibitem{Chan-2019}Chan P. Y., Curvature estimates for steady Ricci solitons, to appear in Trans. Amer. Math. Soc..
\bibitem{CheegerGromoll-1971}Cheeger, J.; Gromoll, D., The splitting theorem for manifolds of nonnegative Ricci
curvature, J. Diff. Geom. \textbf{6} (1971), 119-128.
\bibitem{Chen-2009}Chen, B. L., Strong Uniqueness of Ricci flow, J. Diff. Geom. \textbf{82} (2) (2009), 362-382.
\bibitem{Chowetal-2007}Chow, B. et al, The Ricci flow: techniques and applications, Part I. Geometric aspects. Math. Survey and Monograghs, \textbf{135}, Amer. Math. Soc., Prodidence, RI, (2007).
\bibitem{DengZhu-2015}Deng, Y. X.; Zhu, X. H., Complete noncompact gradient Ricci soitons with nonnegative Ricci curvature, Math. Z. \textbf{279}(2015), 211-226.
\bibitem{DengZhu-2016}Deng, Y. X.; Zhu, X. H., 3d steady Ricci solitons with linear curvature decay, arXiv:1612.05713, to appear in IMRN.
\bibitem{DengZhu-2018}Deng, Y. X.; Zhu, X. H., Classification of gradient steady Ricci solitons with linear scalar curvature decay, arXiv:1809.08502
\bibitem{Deruelle-2012}Deruelle, A., Steady gradient Ricci soliton with curvature in $L^1$, Comm. Anal. Geom, Volume 20, Number 1(2012), 31-53.
\bibitem{FeldmanIlmanenKnopf}Feldman, F.; Ilmanen, T.; Knopf, D., Rotaionally symmetric shrinking and expanding gradient K$\ddot{a}$hler Ricci solitons, J. Diff. Geom. \textbf{65} (2003), 169-209.
\bibitem{Hamilton-1986}Hamilton, R., Four manifolds with positive curvature operator, J. Diff. Geom. \textbf{24} (1986), 153-179.
\bibitem{Hamilton-1988}Hamilton, R., The Ricci flow on surfacs, Contemporary Mathematics \textbf{71} (1988), 237-261.
\bibitem{Hamilton-1995}Hamilton, R., The formation of singularities in the Ricci flow, Surveys in Differential Geometry \textbf{2} (1995), 7-136, International Press.
\bibitem{GuanLuXu-2015}Guan, P. F.; Lu, P.; Xu, Y. Y., A rigidity theorem for codimension one shrinking gradient Ricci solitons in $\R^{n+1}$, Calculus of Variations and Partial Differential Equations, \textbf{54} (2015),  4019-4036.
\bibitem{Ivey-1993}Ivey, T., Ricci soitons on compact three manifolds, Diff Geom. Appl. \textbf{3}(1993), 301-307.
\bibitem{KobayashiNomizu-1963}Kobayashi, S.; Nomizu, K., Foundations of Differential Geometry, Vol. 1, Wiley and Sons, New York, 1963.
\bibitem{KobayashiNomizu-1969}Kobayashi, S.; Nomizu, K., Foundations of Differential Geometry, Vol. 2, Wiley and Sons, New York, 1969.
\bibitem{Kotschwar-2013}Kotschwar B., A local version of Bando's theorem on the real-analyticity of solutions to the Ricci flow, Bulletin of the London Mathematical Society \textbf{45}(1) (2013), 153-158.
\bibitem{Li-2012}Li, P., Geometric Analysis, Cambridge University Press, 2012.
\bibitem{Lott-2003}Lott, J. Some geometric properties of Bakry-Emery-Ricci Tensor, Comm. Math. Helv. \textbf{78} (2003), 865-883.
\bibitem{MunteanuSungWang-2017}Munteanu, O.; Sung, C. J. A.; Wang, J. P., Poisson equation on complete manifolds,  Adv. Math. \textbf{348} (2019), 81-145.
\bibitem{Yau-1985}Yau, S. T., Some function-theoretic properties of complete Riemannian manifolds and their
applications to geometry, Indiana U. Math. J. \textbf{25} (1976), 659-670.
\bibitem{Zhang-2009}Zhang, Z. H., On the completeness of gradient Ricci solitons, Proc. Amer. Math. Soc. \textbf{137} (2009), 2755-2759.
\end{thebibliography}
\end{document}